\documentclass[12pt]{article}
\usepackage{latexsym}
\usepackage{amssymb}
\input{xy}
\xyoption{all}

\usepackage{amscd}

\usepackage[utf8]{inputenc}

\usepackage{color}

\usepackage{soul}


\newtheorem{theorem}{THEOREM}[section]
\newtheorem{proposition}[theorem]{PROPOSITION}
\newtheorem{lemma}[theorem] {LEMMA}
\newtheorem{example}[theorem]{EXAMPLE}
\newtheorem{obser}[theorem]{REMARK}

\newtheorem{corollary}[theorem]{COROLLARY}

\newenvironment{proof}{\noindent {\it\bf Proof.} \rm}

\def\Hom{\mathop{\rm Hom}\nolimits}

\def\qed{\hfill \mbox{$\square$}}
\def\End{\mathop{\rm End}\nolimits}

\def\Ext{\mathop{\rm Ext}\nolimits}

\def\soc{\mathop{\rm soc}\nolimits}

\def\Im{\mathop{\rm Im}\nolimits}
\def\dim{\mathop{\rm dim_k}\nolimits}
\def\Der{\mathop{\rm Der}\nolimits}

\def\Int{\mathop{\rm Int}\nolimits}

\def\id{\mathop{\rm id}\nolimits}

\def\HH {{\rm HH}}

\def\path{\mathop{\rightsquigarrow}\nolimits}
\def\mod{\mathop{\rm mod}\nolimits}

\def \Z{{\mathbb Z}}
\def\mod{\mbox{{\rm mod}}}

\def\S{\mathcal S}

\def \Pe{{\mathbb P}}
\def\Po{\mathcal P}
\def\T{\mathcal T}
\def\Q{\mathcal Q}
\def\A{\mathbb A}

\newcommand{\QB}{\tilde Q}
\newcommand{\QC}{Q}

\newcommand{\ralf}{ }

\begin{document}

\title{On the first Hochschild cohomology group of a cluster-tilted algebra}

\author{Ibrahim Assem,  \  Mar\'\i a Julia Redondo  \ and Ralf Schiffler  \thanks{\footnotesize The first author gratefully acknowledges partial support from the NSERC of Canada, the FRQNT of Qu\'ebec and the Universit\'e de Sherbrooke, the second author has been supported by the project PICT-2011-1510
and is a  research member of
CONICET (Argentina), and the third author gratefully acknowledges partial support from the NSF Grant  DMS-1001637 and  the University of Connecticut. 
The authors wish to thank the { referees of this paper whose useful comments allowed to discover gaps in the proof.}}}

\date{}

\maketitle

\begin{center}
{\it To the memory of Dieter Happel}
\end{center}

\begin{abstract}
Given a cluster-tilted algebra $B$, we study its first Hochschild cohomology group ${\HH}^1(B)$ with coefficients in the $B$-$B$-bimodule $B$. If $C$ is a  tilted algebra such that $B$ is the relation-extension of $C$, then we show that if  $B$ is tame, then ${\HH}^1(B)$ is isomorphic, as a $k$-vector space, to the direct  sum of ${\HH}^1(C)$ with $k^{n_{B,C}}$, where $n_{B,C}$ is an invariant linking the bound quivers of $B$ and $C$. In the representation-finite case, ${\HH}^1(B)$ can be read off simply by looking at the quiver of $B$.
\end{abstract}

\small \noindent 2010 Mathematics Subject Classification : 13F60, 16E40, 16G20

\section{Introduction}

This paper is devoted to the study of the first Hochschild cohomology group ${\HH}^1(B)$ with coefficients in the $B$-$B$-bimodule $B$, see \cite{CE}. 

Cluster-tilted algebras were defined in \cite{BMR} and in \cite{CCS} for the type $\A$, as a by-product of the extensive theory of cluster algebras of Fomin and Zelevinsky \cite{FZ}. Now, it  has been shown in \cite{ABS1} that every cluster-tilted algebra $B$ is the relation-extension of a tilted algebra $C$. Our goal is to relate the Hochschild cohomologies of the two algebras $B$ and $C$. The main step in our argument consists in defining an equivalence relation between the arrows in the quiver of $B$ which are not in the quiver of $C$.  The number of equivalence classes is then denoted by $n_{B,C}$. 
The  first two authors have shown in \cite{AR} that, if the cluster-tilted algebra $B$ is schurian, then there is a short exact sequence of vector spaces 
\[ \xymatrix { 0 \ar[r] &  k^{n_{B,C}}  \ar[r] & {\HH}^1(B) \ar[r]^\varphi &  {\HH}^1(C) \ar[r] & 0.} \]
This holds true, for instance, when $B$ is representation-finite. In a further paper, \cite{ABIS}, it is actually proven that if $B$ is a cluster-tilted algebra (but generally not schurian) then $\varphi$ is still surjective and the kernel of the morphism $\varphi$ is equal to the first cohomology group ${\HH}^1(B,E)$ of $B$ with coefficients in the bimodule $E$.

Our objective in the present paper is to show that the result of \cite{AR} also holds true in case the cluster-tilted algebra $B$ is tame (that is, is of Dynkin or euclidean type).  

\begin{theorem}\label{thmA}
Let $k$ be an algebraically closed field and let $B$ be a tame cluster-tilted algebra which is the relation-extension of the tilted algebra $C$.  Then there is a short exact sequence of vector spaces 
$$0 \to  k^{n_{B,C}}  \to {\HH}^1(B) \to  {\HH}^1(C) \to 0.$$
\end{theorem}

We next show that, for any cluster-tilted algebra $B$, we have ${\HH}^1(B)=0$ if and only if $B$ is hereditary and its quiver is a tree, that is, $B$ is simply connected.  This answers positively for all cluster-tilted algebras Skowro\'nski's question in \cite[Problem 1]{S}{: For which algebras is simple connectedness equivalent to the vanishing of the first Hochschild cohomology group?}

Finally, we consider the case where the cluster-tilted algebra $B$ is representation-finite and show that the $k$-dimension of ${\HH}^1(B)$ can be computed simply by looking at the quiver of $B$: indeed, in this case, for any tilted algebra $C$ such that $B$ is { the } relation-extension of $C$, we have ${\HH}^1(C)=0$ and moreover the invariant $n_{B,C}$ does not depend on the particular choice of $C$ (and thus is denoted simply by $n_B$). Recalling that an arrow in the quiver of $B$ is called {\it inner} if it belongs to two chordless cycles, our theorem may be stated as follows.

\begin{theorem}\label{thmB}
 Let $B$ be a representation-finite cluster-tilted algebra. Then the dimension $n_B$ of ${\HH}^1(B)$ equals the number of chordless cycles minus the number of inner arrows in the quiver of $B$.
\end{theorem}

The paper is organized as follows. In section 2, after briefly setting the notation and recalling the necessary notions, we present results on systems of relations in cluster-tilted algebras. We then introduce the arrow equivalence relation in section 3.  In section 4 we describe the tilted algebras $C$ that are associated to the cluster-tilted algebra $B$ and section 5 is devoted to the proof of Theorem \ref{thmA}. Section 6  contains the proof of Theorem \ref{thmB}.

\section {Systems of relations.}

{\ralf Let $k$ be an algebraically closed field, then it is
well-known that any basic and connected finite dimensional $k$-algebra  $C$
can be written in the form $C=kQ/I,$ where $Q$ is a connected quiver, $kQ$
its path algebra and $I$ an admissible ideal of $kQ$.
  The pair $\left( Q,I\right) $ is then called a \textit{bound quiver}. We recall that finitely generated $C$-modules can be identified with
representations of the bound quiver $\left( Q,I\right) ,$ thus any such  module 
$M$ can be written as $M=\left( M(x),M(\alpha )\right) _{x\in Q_{0},\alpha
\in Q_{1}}$} (see, for instance,
\cite{ASS}).

 A \textit{relation} from $x \in Q_0$ to $y \in Q_0$ is a linear combination $\rho= \sum_{i=1}^{m} a_i w_i$ where each $w_i$ is a path of length at least two from $x$ to $y$ and $a_i\in k$ for each $i$.  If $m=1$ then $\rho$ is \textit{monomial}.  The relation $\rho$ is \textit{minimal} if each scalar $a_i$ is non-zero and $\sum_J a_i w_i \not \in I$ for any non-empty proper subset $J$ of the set $\{ 1, \dots, m \}$, and it is \textit{  strongly minimal} if each scalar $a_i$ is non-zero and $\sum_J b_i w_i \not \in I$ for any non-empty proper subset $J$ of the set $\{ 1, \dots, m \}$, where each $b_i$ is a non-zero scalar.
Two paths $u,v$ will be called {\it parallel} if they have the same source and target.  They are called {\it antiparallel} if the source (or the target) of $u$ equals the target (or the source, respectively) of $v$.

{We sometimes consider an algebra $C$ as a category, in
which the object class $C_{0}$ is a complete set $\left\{
e_{1},\ldots,e_{n}\right\} $ of primitive orthogonal idempotents of $C$ and  $C(x,y)=e_x C e_y$ is the
set of morphisms from $e_{x}$ to $e_{y}$.
 An algebra $C$ is  \textit{constricted} if, for any arrow from $x$ to $y$ in $Q_1$, we have $\dim e_x C e_y= 1$, see \cite{BM}.}

For a general background on the cluster category and cluster-tilting, we refer the reader to \cite{BMRRT}.
It is shown in \cite{ABS1} that, if $T$ is a tilting module over a hereditary algebra $A$, so that $C=\End_A(T)$ is a tilted algebra, then the trivial extension $\tilde C= C \ltimes \Ext^2_C(DC,C)$ (the \textit{relation-extension} of $C$) is cluster-tilted and, conversely, any cluster-tilted algebra is of this form (but in general, not uniquely: see \cite{ABS2}).  As a consequence, we have a description of the quiver of $\tilde C$.  Let $R$ be a \textit{system of relations} for the tilted algebra $C=kQ/I$, that is, $R$ is a subset of $\cup_{x,y \in Q_0}\, e_xIe_y$ such that
$R$, but no proper subset of $R$, generates $I$ as an ideal of $kQ$.  It is shown in \cite{ABS1} that the quiver  $\tilde Q$ of $\tilde C$ is as follows:
\begin{itemize}
\item[(a)] $\tilde Q_0 = Q_0$;
\item[(b)] For $x,y \in Q_0$, the set of arrows in $\tilde Q$ from $x$ to $y$ equals the set of arrows in $Q$ from $x$ to $y$ (which we call \textit{old arrows}) plus $\vert R \cap I(y,x)\vert$ additional arrows (which we call \textit{new arrows}).
\end{itemize}

The relations in $\tilde I$ are given by the partial derivatives of the potential $W = \sum_{\rho \in R} \alpha_{\rho} \rho$, with $\alpha_\rho$ the new arrow associated to the relation $\rho$, see \cite{Ke}.  

Now we   show that $R$ can be chosen as a set of  strongly minimal relations.

\begin{lemma}
If $\rho = \sum_{i=1}^m \lambda_i w_i \in  R$, with $\lambda_i \not = 0$, is not   strongly minimal, there exists $\rho' = \sum_{i=1}^m \mu_i w_i \in I$ with $\mu_1= \lambda_1$ which is  strongly minimal.
\end{lemma}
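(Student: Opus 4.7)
The plan is to argue by induction on the support size $m$. When $m=1$ there is no non-empty proper subset of $\{1\}$, so the condition defining strong minimality is vacuous and $\rho = \lambda_1 w_1 \in I$ is automatically strongly minimal; taking $\rho' := \rho$ works. For the inductive step I consider a relation $\rho = \sum_{i=1}^m \lambda_i w_i \in I$ with all $\lambda_i \neq 0$ that fails to be strongly minimal. By definition, there is a proper non-empty subset $J \subsetneq \{1,\ldots,m\}$ together with non-zero scalars $b_i$, $i \in J$, such that $\sigma := \sum_{i \in J} b_i w_i \in I$.

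The key step is to combine $\sigma$ with $\rho$ so as to strictly shrink the support while preserving the coefficient $\lambda_1$ on $w_1$. Two cases arise. If $1 \in J$, I set $\rho'' := (\lambda_1/b_1)\,\sigma$, which lies in $I$, has $w_1$-coefficient $\lambda_1$, and support contained in $J$, hence of size at most $|J| < m$. If $1 \notin J$, I pick any $i_0 \in J$ and set $\rho'' := \rho - (\lambda_{i_0}/b_{i_0})\,\sigma$, so that the coefficient of $w_{i_0}$ in $\rho''$ vanishes while the coefficient at $w_1$ is untouched (since $w_1$ does not appear in $\sigma$). In either case $\rho'' \in I$ is non-zero (its $w_1$-coefficient is $\lambda_1 \neq 0$), supported on a proper subset of $\{w_1,\ldots,w_m\}$, and has first coefficient $\lambda_1$. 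Discarding the vanishing terms and applying the induction hypothesis to $\rho''$ (or observing that $\rho''$ is itself already strongly minimal) yields the sought strongly minimal relation $\rho'$.

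The main, essentially bookkeeping, obstacle is ensuring that the reduction neither annihilates the coefficient at $w_1$ nor produces the zero element of $I$. This is handled by the choice of combination in each case: the rescaling factor $\lambda_1/b_1$ in the first case forces the $w_1$-coefficient to equal $\lambda_1$ by construction, and in the second case $w_1$ lies outside the support of $\sigma$ and is therefore untouched. Because $\lambda_1 \neq 0$, the support always contains the index $1$, so the iteration never collapses to zero prematurely and must terminate at a strongly minimal relation $\rho'$ of the required form.
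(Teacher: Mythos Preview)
Your argument is correct and follows essentially the same inductive scheme as the paper's proof: reduce the support size while preserving the coefficient $\lambda_1$ at $w_1$, splitting into the cases $1\in J$ and $1\notin J$. The only cosmetic difference is that the paper first passes to a strongly minimal relation $\rho_1$ supported on a proper subset (so that the case $1\in J$ finishes immediately without a further appeal to induction), whereas you take an arbitrary $\sigma$ and recurse uniformly in both cases; your base case $m=1$ is also cleaner than the paper's $m=2$.
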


\begin{proof}
We proceed by induction on $m$.  If $m=2$ and $\rho=\lambda_1 w_1 + \lambda_2 w_2$ is not  strongly minimal, then it is clear that $w_1, w_2$ are relations in $I$ and hence we may take $\rho'= \lambda_1 w_1$.\\
Assume now $m > 2$ and $\rho $ is not strongly minimal.
 Then there is a relation
$\rho_1 = \sum_J \beta_j w_j \in I$, with $J$ a proper non-empty subset of $\{1, \dots, m\}$, $\beta_j \not = 0$. By induction on $m$ we may assume that $\rho_1$ is  strongly minimal. If $1 \in J$, we take $\rho'=\frac{\lambda_1}{\beta_1} \rho_1$ and we are done.
If $1 \not \in J$, let $s$ be the first element in $J$.  We apply the inductive hypothesis to the relation $\rho - \frac{\lambda_s}{\beta_s} \rho_1$. \qed
\end{proof}

\medskip

 A relation $\rho$ is called \textit{triangular} if it is a linear combination of paths that do not contain oriented cycles.

\begin{lemma}  \label{strongly}
Any system of  triangular relations $R=\{ \rho_1, \cdots, \rho_t \}$ can be replaced by a system of   strongly minimal relations $R'= \{ \rho'_1, \cdots, \rho'_t \}$. { Morevoer, each $\rho_i'$ is a linear combination of the paths which occur in the relation $\rho_i$.}
\end{lemma}
 
\begin{proof}
We proceed by induction on $t$.  If $t=1$, then $\rho= \sum_{i=1}^m \lambda_i w_i $ is already  strongly  minimal, since if it is not, then, by the previous lemma we get a relation $\rho'= \sum_{i=1}^m \mu_i w_i \in I$ with $\mu_1= \lambda_1$. Without loss of generality we may assume that $w_1$ has maximal length, and hence the relation $\rho - \rho' = \sum_{i=2}^m (\lambda_i -\mu_i) w_i$  belonging to the ideal generated by $\rho$  yields a contradiction:  in its  triangular expression as an element in $<\rho>$, there should be a summand of the form $\mu u_1 w_1 u_2$, with $\mu$ a non-zero scalar, $u_1, u_2$ paths in $Q$, and then $u_1 w_1 u_2$ is $w_1$ or a path of greater length, so this term cannot appear in $\rho-\rho'$.\\
Let $t>1$, let $\{ w_1, \dots, w_s \}$ be a complete set of paths appearing in the relations $\rho_i$, that is, 
$$\rho_i = \lambda_{1i} w_1 + \cdots + \lambda_{si} w_s.$$
Without loss of generality, we may assume that $w_1$ has maximal length and that $\lambda_{11} \not = 0$.
Now, the ideal generated by the set $\{ \rho_1, \cdots, \rho_t \}$ is equal to the ideal generated by the set
$$\{\rho_1, \tilde \rho_2, \cdots, \tilde \rho_t\}$$
with 
$$ \tilde \rho_j = \rho_j - \frac{\lambda_{1j}}{\lambda_{11}} \rho_1.$$
 If we apply the previous lemma to $\rho_1$ we get a  strongly minimal relation $\rho'_1$ with $\lambda_{11}$ as the first coefficient. Following an argument similar to what we did in the case  $t=1$, using the maximality of $w_1$ we get that the relation $\rho_1 - \rho'_1$ belongs to the ideal $ < \tilde \rho_2, \cdots, \tilde \rho_t>$, and so we get a system of relations $\{ \rho'_1, \tilde \rho_2, \cdots, \tilde \rho_t\}$, with $\rho'_1$  strongly minimal.  Now we proceed by induction on the set  $\{ \tilde \rho_2, \cdots, \tilde \rho_t\}$, and we get a system of  relations $ \{ \rho'_2, \cdots, \rho'_t \}$ which are  strongly minimal with respect to the ideal $ I'=< \rho'_2, \cdots, \rho'_t >$. Assume that one of these relations is not  strongly minimal with respect to $I$, say $\rho'_i= \sum_{i=2}^s \beta_i w_i$ and $\rho''= \sum_J \mu_i w_i \in I$, where $J$ is a proper subset of $\{ 2, \cdots, s\}$. So $\rho'' \not \in I'$ says that if we write it as an element in $I$, the relation $\rho'_1$ should appear.  Again we get a contradiction when considering the summands that contain $w_1$ as a subpath.
\qed
\end{proof}

\bigskip

Let $w$ be a nontrivial walk in a bound quiver $(Q,I)$.  Assume that one writes $w= uw'v$ where each of $u, w', v$ is a subwalk of $w$.  We say that $u,v$ {\it point to the same direction} in $w$ if $u$ and $v$, or $u^{-1}$ and $v^{-1}$, are paths in $Q$.

A reduced walk $w=uw'v$ having $u$ and $v$ pointing to the same direction  is called a {\it sequential walk} if there is a relation $\rho = \sum_i \lambda_i u_i$ such that $u=u_1$ or $u= u_1^{-1}$, there is a relation $\sigma = \sum_j \mu_j v_j$ such that $v=v_1$ or $v= v_1^{-1}$ and  no  subpath $w_1$ of $w'$, or of $(w')^{-1}$,  is involved in a relation of the form $\sum \lambda_i w_i$.

\begin{example}
The quiver
\[ \xymatrix{    
\cdot \ar[r]^\beta & \cdot \ar[d]^\gamma \\
\cdot \ar[u]^\alpha \ar[r]_\delta & \cdot
} \]
bound by the relation $\alpha \beta =0$ contains a sequential walk $w= \alpha \beta \gamma \delta^{-1} \alpha \beta$.

\end{example}

The following lemma generalises \cite{A,HL,AR}. 

\begin{lemma} \label{2.4}
Let $C=kQ/I$ be a tilted algebra.  Then the bound quiver $(Q,I)$ contains no sequential walk.
\end{lemma}

\begin{proof} 
Suppose that $w= u w' v$ is a sequential walk, with $u,v$ the (only) two subwalks of $w$ which are involved in relations pointing to the same direction. 
Clearly, $w'$ may have self-intersections and may also intersect the paths $u$ and $v$. Then $(Q,I)$ contains a bound subquiver $(Q',I')$, maybe not full, consisting of the points and arrows on $w$ as well as all the points and arrows which lie on a path parallel to $u$ or to $v$ and bound to it by a relation. This may be visualised as in the following pictures (drawn under the assumption that $w'$, the walk from $b_1$ to $b_s$, has no self-intersections and intersects neither $u$ nor $v$):

\begin{itemize}
\item [(a)]
\begin{tiny}
\[ \xymatrix{ a_1  \ar[r] \ar@/^1pc/@{--}[rr]     & \cdots  \ar[r] & a_r=b_1 \ar@{-}[r] & \cdots \ar@{-}[r] & b_s=c_1 \ar[r] \ar@/^1pc/@{--}[rr] & \cdots \ar[r] &  c_t
} \]
\end{tiny}

\item [(b)] 
\begin{tiny}
\[ \xymatrix{  &&&&& \scriptscriptstyle \bullet & \dots & \scriptscriptstyle \bullet \ar[dr] \\
a_1  \ar[r] \ar@/^1pc/@{--}[rr]    & \dots \ar[r] & a_r=b_1 \ar@{-}[r] & \dots \ar@{-}[r] & b_s=c_1 \ar[ur] \ar[ddr] \ar[dr] \ar@{--}[rrrr] & & &  & c_t \\
&&&&& \scriptscriptstyle \bullet & \dots & \scriptscriptstyle \bullet \ar[ur] \\
&&&&& \scriptscriptstyle \bullet & \dots & \scriptscriptstyle \bullet \ar[uur]  } \]
\end{tiny} 

\item [(c)]
\begin{tiny}
\[ \xymatrix{  & \scriptscriptstyle \bullet & \dots & \scriptscriptstyle \bullet \ar[dr]   \\
a_1  \ar[ur] \ar[dr] \ar[ddr] \ar@{--}[rrrr] & & &  & a_r=b_1 \ar@{-}[r]    & \dots \ar@{-}[r] & b_s=c_1 \ar[r] \ar@/^1pc/@{--}[rr]    & \dots \ar[r] & c_t \\
& \scriptscriptstyle \bullet & \dots & \scriptscriptstyle \bullet \ar[ur]  \\
& \scriptscriptstyle \bullet & \dots & \scriptscriptstyle \bullet \ar[uur]  
} \]
\end{tiny}

\item [(d)]
\begin{tiny}
\[ \xymatrix{  & \scriptscriptstyle \bullet &  \dots & \scriptscriptstyle \bullet \ar[dr] &&& & \scriptscriptstyle \bullet & \dots & \scriptscriptstyle \bullet \ar[dr] \\
a_1  \ar[ur] \ar[dr] \ar[ddr] \ar@{--}[rrrr] & & &  & a_r=b_1 \ar@{-}[r]    & \dots \ar@{-}[r] & b_s=c_1 \ar[ur] \ar[dr]  \ar[ddr] \ar@{--}[rrrr] & & &  & c_t \\
& \scriptscriptstyle \bullet & \dots & \scriptscriptstyle \bullet \ar[ur] &&& & \scriptscriptstyle \bullet & \dots & \scriptscriptstyle \bullet \ar[ur] \\
& \scriptscriptstyle \bullet & \dots & \scriptscriptstyle \bullet \ar[uur] &&& & \scriptscriptstyle \bullet & \dots & \scriptscriptstyle \bullet \ar[uur]
 } \]
\end{tiny}

\item [(e)]
\begin{tiny}
\[ \xymatrix{ & b_s=a_1 \ar@/^1pc/@{--}[rr] \ar[r] & \dots \ar[r] & a_r=b_1 \ar@{-}[dr] \\
\scriptscriptstyle \bullet \ar@{-}[r] \ar@{-}[ru]  & \scriptscriptstyle \bullet & \dots & \scriptscriptstyle \bullet \ar@{-}[r] & \scriptscriptstyle \bullet} \]
\end{tiny}

\item [(f)]
\begin{tiny}
\[ \xymatrix{  
& & & \scriptscriptstyle \bullet & \dots & \scriptscriptstyle \bullet \ar[ddr]  \\
&  & & \scriptscriptstyle \bullet & \dots & \scriptscriptstyle \bullet \ar[dr] \\
& & b_s=a_1 \ar[ur] \ar[uur] \ar[dr] \ar@{--}[rrrr] & & &   & a_r=b_1 \ar@{-}[ddrr] \\
& & & \scriptscriptstyle \bullet & \dots & \scriptscriptstyle \bullet \ar[ur] \\
\scriptscriptstyle \bullet \ar@{-}[r] \ar@{-}[rruu]  & \scriptscriptstyle \bullet &  &  & \dots &  & & \scriptscriptstyle \bullet \ar@{-}[r] & \scriptscriptstyle \bullet} \]
\end{tiny}
\end{itemize}
where we have represented relations by dotted lines.  The last two cases occur when $u=v$.   Moreover, the minimality of the length of $w'$ implies that there is no additional arrow between two $b_j$'s.
Let $C'=kQ'/I'$.  Because of \cite[III.6.5, p. 146]{Ha1}, $C'$ is also a tilted algebra.  In each of these cases above, let $M$ be the $C'$-module defined as a representation by
\[ M(x)=  \left\{ 
\begin{array}{rll}
k,  &\mbox{if $x$ is a point of the walk $w'$},\\
0, & \mbox{otherwise},
\end{array} \right. \]
and
\[ M(\alpha)= \left\{ 
\begin{array}{rll}
\id,  &\mbox{if $s(\alpha), t(\alpha) $ are points of the walk $w'$},\\
0, & \mbox{otherwise},
\end{array} \right. \]
for every point $x$ and arrow $\alpha$ in the quiver of $C'$.  Since there is no subpath $w_1$ of $w'$, or of $(w')^{-1}$, involved in a relation of the form $\sum \lambda_i w_i$, then $M$ is indeed a module.  It is actually a tree module, and therefore it is indecomposable, see \cite{K}. On the other hand, it can be seen that both of its projective and its injective dimensions equal two, a contradiction because $C'$ is tilted.
\qed
\end{proof}

\bigskip

Let now $\tilde C = k \tilde Q / \tilde I$ be the relation-extension of a tilted algebra $C$.  A walk $w= \alpha w' \beta$ in $(\tilde Q, \tilde I)$ is called a $C$-{\it sequential walk} if:

\begin{itemize}
\item [(i)] $w'$ consists entirely of old arrows,
\item [(ii)] $\alpha, \beta$ are two new arrows corresponding respectively to old relations $\rho= \sum_i \lambda_i u_i$ and $\sigma= \sum_j \mu_j v_j$, and
\item [(iii)] $w= u_i w' v_j$ is a sequential walk in $(Q,I)$ for any $i,j$.
\end{itemize}

\begin{corollary}\label{13}
Let $C=kQ/I$ be a tilted algebra.  Then the bound quiver of its  relation-extension $\tilde C$  contains no { $C$-sequential}  walk. 
\end{corollary} 

\section{Arrow equivalence}

The following lemma is an easy consequence of the main result in \cite{ACT}.  For the benefit of the reader, we give an independent proof. 

Recall from \cite{DWZ} that for a given arrow $\beta$, the \emph{cyclic partial derivative} $\partial_\beta$ in $\beta$ is defined on each cyclic path $\beta_1\beta_2\cdots\beta_s$ by $\partial_\beta (\beta_1\beta_2\cdots\beta_s) = \sum_{i:\beta=\beta_i} \beta_{i+1}\cdots\beta_s\beta_1\cdots\beta_{i-1}$. Note that $\partial_\beta(\beta_1\beta_2\cdots\beta_s)=\partial_\beta(\beta_j\cdots\beta_s\beta_1\cdots\beta_{j-1})$ for every $j$ such that $1 \leq j \leq s$.

From now on, we consider a given presentation of a tilted algebra $C=kQ/I$ with  minimal system of relations $R$ and consider its relation-extension $B=k \tilde Q / \tilde I$ { together with} the presentation having as relations the cyclic partial derivatives of the potential $W= \sum_{\rho \in R} \alpha_\rho \rho$, with $\alpha_\rho$ the new arrow associated to the relation $\rho$.  All these relations are triangular. Then we reduce this system to an equivalent system of strongly minimal relations each of which is a linear combination of the relations obtained from the partial derivatives (see Lemma \ref{strongly}.)

\begin{lemma}\label{relations}
{ Let $C=kQ/I$ be a tilted algebra and $B=k \tilde Q / \tilde I$ be such that $B= \tilde C$ and $\tilde I$ is generated by the partial derivatives of the potential. } 
Let $\rho= \sum_{ i = 1}^m a_i w_i$ be a minimal relation in $\tilde I$.  Then either $\rho$ is a relation in $I$, or { there exist $m$ new arrows} $\alpha_1, \dots, \alpha_m$ such that $w_i=u_i\alpha_iv_i$ (with $u_i,  v_i$ paths consisting entirely of old arrows).
\end{lemma}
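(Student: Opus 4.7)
The plan is to exploit the $\mathbb{Z}$-grading on $k\tilde Q$ (call it the $\nu$-grading) defined by $\nu(\text{old arrow})=0$ and $\nu(\text{new arrow})=1$. The potential $W=\sum_{\rho\in R}\alpha_\rho\rho$ is a sum of cycles each of $\nu$-degree exactly $1$, so each cyclic partial derivative $\partial_\beta W$ is $\nu$-homogeneous: for an old arrow $a$, the formula $\partial_a W=\sum_{\rho\ni a}\alpha_\rho\,\partial_a\rho$ gives $\nu$-degree $1$; for a new arrow $\alpha_\rho$, $\partial_{\alpha_\rho}W=\rho$ has $\nu$-degree $0$. Since $\tilde I$ is generated as a two-sided ideal by these cyclic derivatives, $\tilde I$ is a $\nu$-graded ideal of $k\tilde Q$.

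I would then decompose $\rho=\sum_{k\ge 0}\rho^{(k)}$, where $\rho^{(k)}$ collects the summands $a_i w_i$ with $\nu(w_i)=k$. Graded-ness of $\tilde I$ gives $\rho^{(k)}\in\tilde I$ for every $k$, and minimality of $\rho$ then forces at most one $\rho^{(k)}$ to be non-zero (otherwise the partition of $\{1,\ldots,m\}$ according to $\nu(w_i)$ would produce a proper non-empty sub-sum of $\rho$ already in $\tilde I$). Hence all paths $w_i$ share a common number $k$ of new arrows.

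For the case analysis: if $k=0$, then via the algebra inclusion $C=kQ/I\hookrightarrow \tilde C$ arising from the trivial-extension description $\tilde C=C\ltimes E$ with $E=\Ext_C^2(DC,C)$, we have $kQ\cap \tilde I=I$, so $\rho$ is a relation in $I$. If $k\ge 2$, each $w_i$ maps in $\tilde C$ to a product containing at least two factors in $E$, which vanishes since $E^2=0$ in a trivial extension; thus each $w_i$ already lies in $\tilde I$, contradicting minimality. We are left with $k=1$, so that every $w_i=u_i\alpha_i v_i$ with $u_i,v_i$ paths of old arrows and $\alpha_i$ a new arrow.

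The main obstacle is to establish the distinctness of $\alpha_1,\ldots,\alpha_m$. Suppose for contradiction that $\alpha_i=\alpha_j=\alpha$ for some $i\ne j$; then the concatenation $w_i^{-1}w_j$ yields a walk in $\tilde Q$ of the form $\alpha^{-1}\,w'\,\alpha$, where $w'=u_i^{-1}u_j$ consists entirely of old arrows. Applying Lemma~\ref{13} (in the case $\beta=\alpha$, covered by cases (e) and (f) of its proof) forces $w'$ to contain an old subpath antiparallel to some new arrow $\gamma$, that is, a path appearing as a summand of the associated relation $\rho_\gamma\in R$. Using $\rho_\gamma\in I\subset\tilde I$ to rewrite this summand modulo $\tilde I$ and substituting back into $w_i$ or $w_j$ produces a presentation of $\rho$ whose support splits into a proper non-empty sub-sum already lying in $\tilde I$, contradicting minimality. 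Therefore the $\alpha_i$ are pairwise distinct and there are exactly $m$ of them, as claimed.
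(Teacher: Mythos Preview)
Your $\nu$-grading argument is essentially the paper's proof made explicit. The paper simply observes that each generator $\partial_\beta W$ of $\tilde I$ is either an old relation $\rho_i \in I$ (when $\beta = \alpha_i$ is new) or a combination in which every term carries exactly one new arrow (when $\beta$ is old), and stops there; your grading, together with the passage from homogeneous generators to homogeneous minimal relations, fills in what the paper leaves to the reader. One small caveat: for $m=1$ and $k \ge 2$, minimality is vacuous (there is no proper non-empty subset of $\{1\}$), so no contradiction arises; such a monomial does lie in $\tilde I$ but falls outside the dichotomy of the statement. This edge case is also not addressed by the paper and is irrelevant for the applications.

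The problem is your distinctness paragraph. First, ``exactly $m$ new arrows'' should almost certainly be read as ``one new arrow $\alpha_i$ per term $w_i$,'' not as ``$m$ pairwise distinct new arrows''; the paper's own proof makes no attempt at distinctness, and the definition of $\sim$ immediately afterwards does not require it. Second, and more seriously, your argument has a genuine gap. Lemma~\ref{13} only tells you that the walk $w' = u_i^{-1} u_j$ contains an old subpath \emph{antiparallel} to some new arrow $\gamma$, that is, a path sharing the (reversed) endpoints of $\gamma$; it does \emph{not} tell you that this subpath occurs as a summand of $\rho_\gamma$. Even granting that, rewriting $w_j$ modulo $\tilde I$ via $\rho_\gamma$ would replace $w_j$ by a linear combination of \emph{other} paths of $\nu$-degree $1$; this does not exhibit a proper non-empty sub-sum of the original $\rho = \sum_i a_i w_i$ lying in $\tilde I$, so no contradiction with minimality follows. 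You should simply drop the distinctness claim.
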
 

\begin{proof} Let  $\rho_1 , \ldots ,\rho_s$ be a system of minimal relations for the tilted algebra $C$. Then each relation $\rho_i$ induces a new arrow $\alpha_i$ and the product  $\rho_i\alpha_i$ is a linear combination of cyclic paths in the quiver of the cluster-tilted algebra $B$. The potential of $B$ can be given as $W= \sum_{i=1}^s  \rho_i \alpha_i$ 
and the ideal of $B$ is generated by all partial derivatives $\partial_\beta W$ of the potential $W$ {\ralf with respect to} the arrows $\beta$.
If $\beta$ is one of the new arrows $\alpha_i$ 
{\ralf then $\partial_\beta W $ is just the ``old" relation $\rho_i\in I$.}

If $\beta$ is an old arrow then 
$\partial_\beta W$ is a sum of terms which are cyclic permutations of $(\partial_\beta \rho_i ) \alpha_i$.  Now,  each of the summands contains exactly one new arrow $\alpha_i$. \qed
\end{proof}

\bigskip

The previous lemma asserts that if $\rho = \sum_i a_i w_i$ is a  strongly minimal relation lying in $\tilde I$ but not in $I$, then on each $w_i$ lies exactly one new arrow $\alpha_i$ and each new arrow appears in this way.  Clearly, the $\alpha_i$ are not necessarily distinct as arrows of $\tilde Q$. 

Lemma \ref{relations} above brings us to our main definition.  Let $B=k \tilde Q /\tilde I$ be a cluster-tilted algebra and $C=kQ/I$ a tilted algebra such that $B=\tilde C$.  We define a relation $\sim$ on the set $\tilde Q_1 \setminus Q_1$ of new arrows as follows. For every $\alpha \in \tilde Q_1 \setminus Q_1$, we set $\alpha \sim \alpha$.   If $\rho=\sum_{i=1}^m a_i w_i $ is a  strongly minimal  relation in $\tilde I$ and $\alpha_i $ are as in Lemma \ref{relations} above, then we set $\alpha_i \sim \alpha_j$ for any $i,j$ such that $1 \leq i, j \leq m$.

By Corollary \ref{13}, the relation $\sim$ is unambiguously defined.  It is clearly reflexive and symmetric.  We let $\approx$ be the least equivalence relation defined on the set $\tilde Q_1 \setminus Q_1$ such that $\alpha \sim \beta$ implies $\alpha \approx \beta$ (that is, $\approx$ is the transitive closure of $\sim$).

We define the \textit{relation invariant} of $B$ to be the number $n_{B,C}$ of equivalence classes under the relation $\approx$.

Observe that the equivalence relation $\approx$ is related to the direct sum decomposition of the $C$-$C$- bimodule $E$.  Indeed, $E$ is generated as $C$-$C$-bimodule by the new arrows.  If two new arrows occur in a  strongly minimal relation, this means that they are somehow yoked together in $E$.  It is proven in \cite[Lemma 4.3]{ABIS} that $E$ decomposes, as $C$-$C$-bimodule, into the direct sum of $n_{B,C}$ summands.

The following two lemmata will be useful in section \ref{sect 5}. They use essentially the fact that cluster-tilted algebras of type $\tilde\mathbb{A}$ are gentle (because of \cite[Lemma 2.5]{ABCP}) and in particular all relations are monomial of length 2 contained inside 3-cycles that is, cycles of the form
\[\xymatrix{&\cdot\ar[rd]^\beta\\
\cdot\ar[ru]^\alpha&&\cdot\ar[ll]^\gamma}
\]
bound by $\alpha\beta=\beta\gamma=\gamma\alpha=0$.

\begin{lemma}\label{lem 3.3}
 Let $B$ be a cluster-tilted algebra of type $\tilde\mathbb{A}$ and let $C_1,C_2$ be tilted algebras such that $B=\tilde C_1=\tilde C_2$. Let $R_1$, $R_2$ be systems of relations for $C_1$, $C_2$ respectively. Then $|R_1|=|R_2|$. 
\end{lemma}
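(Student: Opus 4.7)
The plan is to show that $|R_i|$ coincides with the number of 3-cycles in $\tilde Q$ bounded by length-$2$ zero relations of $B$, a combinatorial invariant of $B$ alone; the equality $|R_1|=|R_2|$ is then immediate.

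My first step is to verify that every $\rho\in R_i$ has length exactly $2$. By Lemma \ref{relations}, the ideal $\tilde I$ is generated by the cyclic partial derivatives of the potential $W=\sum_{\rho\in R_i}\alpha_\rho\rho$, and for $\rho=\alpha_1\alpha_2\cdots\alpha_\ell$ the cyclic derivative $\partial_{\alpha_j}(\alpha_\rho\rho)$ is itself a path of length $\ell$. As noted just before the statement, $B$ gentle of type $\tilde{\mathbb{A}}$ has $\tilde I$ generated by length-$2$ monomials, which forces $\ell=2$ for every $\rho\in R_i$.

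I would then set up a bijection between $R_i$ and the set of relation-bounded 3-cycles of $\tilde Q$. In one direction, each $\rho=\alpha_1\alpha_2\in R_i$ produces the 3-cycle $\alpha_\rho\alpha_1\alpha_2$ whose three length-$2$ subpaths appear among the cyclic derivatives of $W$ and hence vanish in $\tilde I$. Conversely, a relation-bounded 3-cycle of $\tilde Q$ must contain exactly one new arrow: \emph{at most} one, by Lemma \ref{13} applied to the walk $\alpha w'\beta$ obtained between two new arrows along the remaining old arrow; and \emph{at least} one, because the quiver $Q$ of the tilted algebra $C_i$ is acyclic (being the quiver of a tilted algebra from the acyclic hereditary $k\tilde{\mathbb{A}}_n$) and thus contains no oriented 3-cycle of old arrows. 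The unique new arrow $\alpha_\rho$ of such a 3-cycle, together with the two remaining old arrows, recovers the corresponding $\rho\in R_i$.

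Since the number of relation-bounded 3-cycles of $\tilde Q$ depends only on $B$, the equality $|R_1|=|R_2|$ follows. The most delicate point to check is that a single new arrow $\alpha_\rho$ cannot lie in two distinct relation-bounded 3-cycles; this would require two parallel length-$2$ paths of old arrows between the endpoints of $\rho$, and the gentleness of $B$ together with the type $\tilde{\mathbb{A}}$ restriction (there is a unique non-bounded cycle in $\tilde Q$) excludes this possibility, so any such additional 3-cycle through $\alpha_\rho$ must coincide with the unbounded fundamental cycle and is therefore not counted.
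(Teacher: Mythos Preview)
Your overall strategy coincides with the paper's: both arguments identify $|R_i|$ with the number of $3$-cycles in $\tilde Q$, an invariant of $B$ alone. The paper reaches this more quickly by working from $B$ down to $C_i$ --- it simply invokes the known fact that any tilted algebra $C_i$ with $\tilde C_i=B$ is obtained by deleting exactly one arrow from each chordless cycle of $\tilde Q$, so the remaining length-$2$ path in each $3$-cycle is a relation and these exhaust $R_i$. You instead work from $C_i$ up to $B$ via the potential, which is legitimate but longer.

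There is, however, a genuine gap in your first step. You write ``for $\rho=\alpha_1\alpha_2\cdots\alpha_\ell$'', silently assuming each $\rho\in R_i$ is a monomial; a priori $\rho$ could be a linear combination of parallel paths, and nothing you have said rules this out. More seriously, your deduction that $\ell=2$ does not follow from what you wrote: the fact that $\partial_{\alpha_j}(\alpha_\rho\rho)$ is a path of length $\ell$ lying in an ideal $\tilde I$ generated by length-$2$ monomials only tells you that some length-$2$ subpath of it is a relation --- it does not force $\ell=2$. The cyclic derivatives are \emph{a} generating set for $\tilde I$, not a minimal one, so you cannot compare lengths of generators directly.

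Both issues are reparable, but you must use the minimality of $R_i$. Since $\tilde I$ is a monomial ideal and $\rho=\sum a_iw_i\in I\subset\tilde I$, each $w_i$ lies in $\tilde I$; as the $w_i$ consist of old arrows, projecting along $B\to C$ gives $w_i\in I$, so minimality of $\rho$ forces $m=1$. Now $\rho$ is a single path in the monomial ideal $\tilde I$, hence some length-$2$ subpath $\sigma$ of $\rho$ lies in $\tilde I$, and again $\sigma\in I$ since it consists of old arrows. Writing $\sigma$ in terms of the generating set $R_i$ and comparing lengths shows $\sigma=\rho_j$ for some $\rho_j\in R_i$; if $\rho_j\ne\rho$ then $\rho\in\langle R_i\setminus\{\rho\}\rangle$, contradicting the minimality of $R_i$ as a system of relations. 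Hence $\rho=\sigma$ has length $2$. With this in hand, the rest of your bijection argument goes through.
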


\begin{proof}
 Indeed, in order to obtain $C_1$ and $C_2$ from $B$, we have to delete exactly one arrow from each chordless cycle (for the notion of chordless cycle, see \cite{BGZ} or section \ref{sect 6} below). Because $B$ is of type $\tilde\mathbb{A}$, then the chordless cycles are 3-cycles, and no arrow belongs to two distinct 3-cycles. Deleting exactly one arrow from each 3-cycle leaves a path of length 2. The system of relations for the tilted algebra consists in exactly these paths of length 2. This implies the statement.
 \qed 
\end{proof}

\medskip

\begin{lemma}
 \label{lem 5.5} Let $B=\tilde C$, where $C$ is a tilted algebra of type $\tilde\mathbb{A}$. Let $R$ be a system of relations for $C$. Then $n_{B,C}=|R|$. In particular, $n_{B,C}$ does not depend on the choice of $C$.
\end{lemma}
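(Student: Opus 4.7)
The plan is to exploit the gentle structure of cluster-tilted algebras of type $\tilde{\mathbb{A}}$ to show that the equivalence relation $\approx$ is essentially trivial, so that the equivalence classes of new arrows are singletons.

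First I would observe, as in the remark preceding Lemma \ref{lem 3.3}, that $B$ is gentle by \cite[Lemma 2.5]{ABCP}, and the ideal $\tilde I$ is generated by monomial relations of length $2$ sitting inside the $3$-cycles of $\tilde Q$. A standard feature of such a gentle algebra is that $\tilde I$ admits a $k$-basis consisting of those paths of $k\tilde Q$ which contain one of these generating monomials as a subpath. Consequently, if $\rho=\sum_{i=1}^m a_i w_i\in\tilde I$ is written as a linear combination of pairwise distinct paths with $a_i\neq 0$, then necessarily each individual path $w_i$ already lies in $\tilde I$.

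The key step is now to deduce that every strongly minimal relation in $\tilde I$ is monomial. Indeed, assume $\rho=\sum_{i=1}^m a_iw_i$ is strongly minimal with $m\geq 2$. Taking $J=\{1\}$ and $b_1=a_1\neq 0$, strong minimality requires $a_1 w_1\notin\tilde I$, i.e.\ $w_1\notin\tilde I$; this contradicts the previous paragraph. Hence $m=1$.

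With this in hand, the defining clause for $\sim$ applied to a strongly minimal relation $\rho = a w$ only ever yields $\alpha \sim \alpha$: by Lemma \ref{relations}, the single path $w$ either lies in $I$ (and contributes no new arrow at all) or contains exactly one new arrow. Consequently, $\approx$ is the identity on $\tilde Q_1\setminus Q_1$, and $n_{B,C}$ equals the number of new arrows. By the construction of $\tilde Q$ recalled from \cite{ABS1}, the number of new arrows equals $|R|$, proving $n_{B,C}=|R|$. Independence of the choice of $C$ is then immediate from Lemma \ref{lem 3.3}.

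The only delicate point is the gentle-algebra observation that $\tilde I$ has a basis of paths, which is what forces strongly minimal relations to be monomial; everything else is bookkeeping with Lemmata \ref{relations} and \ref{lem 3.3}.
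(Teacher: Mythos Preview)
Your proposal is correct and follows essentially the same route as the paper: both use that $B$ is gentle to conclude that every strongly minimal relation in $\tilde I$ is a single monomial, whence the relation $\sim$ (and hence $\approx$) is trivial on $\tilde Q_1\setminus Q_1$, giving $n_{B,C}=|\tilde Q_1\setminus Q_1|=|R|$. Your version is slightly more explicit in justifying why gentleness forces monomiality (via the path-basis of $\tilde I$) and in invoking Lemma~\ref{lem 3.3} for the independence of $C$, but the argument is the same.
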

\begin{proof}
 Let $\alpha_i$, $\alpha_j$ be two equivalent new arrows, then there exists a sequence of new arrows 
 \[\alpha_i=\beta_1\sim\beta_2\sim\cdots\sim\beta_t=\alpha_j\]
 where $\beta_\ell$, $\beta_{\ell+1}$ appear in the  same  strongly minimal relation in $(\tilde Q,\tilde I)$. Now, $B$ is gentle. Hence  strongly minimal relations contain just one monomial. Therefore $\beta_\ell=\beta_{\ell+1}$ for each $\ell$, and $\alpha_i=\alpha_j$. This shows that the relation invariant $n_{B,C}$ is equal to the number of new arrows, and the latter is equal to $|R|$ because of \cite[Theorem 2.6]{ABS1}.\qed
\end{proof}

\section{The tame cluster-tilted algebras}\label{sect 5}
Our objective in
this section is to describe the tilted algebras $C$ that are associated to the tame cluster-tilted algebra $B$. Because of \cite[Theorem A]{BMR}, the tame representation-infinite cluster-tilted algebras
are just the cluster-tilted algebras of euclidean type, that is, the
relation-extensions of the tilted algebras of euclidean type. Our strategy will consist of reducing the proof to the case where $C$ is a constricted algebra. 

An algebra $K$ is {\it tame concealed} if there exists a tame hereditary algebra $A$ and a postprojective tilting $A$-module $T$ such that $K= \End_A(T)$.  Then $\Gamma(\mod K)$ consists of a postprojective component $\Po_K$, a preinjective component $\Q_K$ and a family $\T_K = (\T_\lambda)_{\lambda \in \Pe_1(k)}$ of stable tubes separating $\Po_K$ from $\Q_K$, see \cite[4.3]{Ri}.

We now define {\it tubular extensions} of a tame concealed algebra. A {\it branch} $L$ with a {\it root} $a$ is a finite connected full bound subquiver, containing $a$, of the following infinite tree, { where all compositions $\alpha \beta$ of two arrows labeled as $\alpha$ and $\beta$ are zero.}

\begin{tiny}
\[ \xymatrix{ &&&& . \\
&&& \ar@{.}[ru] \ar@{..}[rd] \ar[ld]_\alpha\\ 
&& \scriptscriptstyle \bullet \ar[lldd]_\alpha \ar[rd]_\beta && \dots\\
&&& \ar@{..}[ru] \ar@{..}[rd]  \\ 
a \ar[rrdd]_\beta &&&& \dots  \\
&&& \ar@{..}[ru] \ar@{..}[rd] \ar[ld]_\alpha\\ 
&& \scriptscriptstyle \bullet \ar[rd]_\beta  && \dots\\
&&& \ar@{..}[ru] \ar@{..}[rd] \\ 
&&&& .
} \]
\end{tiny} 
Let now $K$ be a tame concealed algebra, and $(E_i)^n_{i=1}$ be a family of simple regular $K$-modules.  For each $i$, let $L_i$ be a branch with root $a_i$.  The tubular extension $B=K[E_i, L_i]_{i=1}^n$ has as objects those of $K, L_1, \cdots, L_n$ and as morphism spaces 
$$ B(x,y) =  \left\{   \begin{array}{lll}
K(x,y)  \quad &  \mbox{if $x,y \in K_0$}  \\
L_i(x,y)  & \mbox{if $x,y \in (L_i)_0$}  \\
L_i(x,a_i) \otimes_k E_i(y)  & \mbox{if $x \in (L_i)_0, y \in K_0$}  \\
0  & \mbox{otherwise.}
\end{array} \right . $$
The {\it tubular coextension} ${}^n_{i=1}[E_i, L_i]K$ is defined dually. \\
For each $\lambda \in \Pe_1(k)$, let $r_\lambda$ denote the rank of the stable tube $\T_\lambda$ of $\Gamma(\mod K)$.  The {\it tubular type} $n_B=(n_\lambda)_{\lambda \in  \Pe_1(k)}$ of $B$ is defined by $$n_\lambda = r_\lambda + \sum_{E_i \in \T_\lambda} \vert (L_i)_0 \vert.$$
Since all but at most finitely many $n_\lambda$ equal $1$, we write for $n_B$ the finite sequence containing at least two $n_\lambda$, including all those larger than $1$, in non-decreasing order.  We say that $n_B$ is {\it domestic} if it is one of the forms $(p,q), (2,2,r), (2,3,3), (2,3,4), (2,3,5)$.
The following structure theorem is
due to Ringel, see \cite[Theorem 4.9, p. 241]{Ri}.

\begin{theorem} Let $C$ be a representation-infinite tilted algebra of euclidean type.  Then $C$ contains a unique tame concealed full convex subcategory $K$ and $C$ is a domestic tubular extension or a domestic tubular coextension of $K$.
\end{theorem}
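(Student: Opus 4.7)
The plan is to reduce the theorem to an analysis of tilting modules over the hereditary algebra $A$ of euclidean type with $C = \End_A(T)$ for some multiplicity-free tilting $A$-module $T$. Using the standard decomposition $\Gamma(\mod A) = \Po_A \cup \T_A \cup \Q_A$, where $\T_A$ is a sincere separating family of stable tubes, split $T = T_\Po \oplus T_\T \oplus T_\Q$ according to where its indecomposable summands lie. Since $C$ is representation-infinite, $T$ is not a complete slice, so $T_\T \neq 0$.

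The first step is to prove that one of $T_\Po$ or $T_\Q$ vanishes. If both were non-zero, the separating property of $\T_A$ combined with the tilting conditions $\Ext^1_A(T,T) = 0$ and $|\ind T| = \mathrm{rk}\, K_0(A)$ would yield a numerical contradiction via Happel's dimension formulas, since the summands inside tubes use up too much of the Grothendieck rank available for $T$ to span the components on both sides of $\T_A$. Assume therefore $T_\Q = 0$; the dual case $T_\Po = 0$ produces the coextension conclusion.

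Construct the tame concealed subcategory $K$ by completing $T_\Po$ to a tilting module $T'$ with support entirely in $\Po_A$, iteratively using APR-tilts. The algebra $K = \End_A(T')$ is tame concealed by definition, and embeds as a full convex subcategory of $C$ because $\Hom_A(\T_A, \Po_A) = 0$ forces any $C$-path between two objects of $K$ to stay inside $K$. Uniqueness of $K$ follows from the uniqueness of the postprojective component of $\Gamma(\mod C)$, which is a categorical invariant of $C$.

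The branch structure is then obtained by analyzing, for each stable tube $\T_\lambda$, the summands of $T_\T$ inside $\T_\lambda$: the constraint $\Ext^1_A(T_\T, T_\T) = 0$ restricts them to a connected subquiver of the tube which by direct inspection of the mesh relations is exactly a branch $L_\lambda$ rooted at a simple regular module $E_\lambda \in \T_\lambda$. The main technical obstacle is matching the morphism spaces: one must show that $\Hom_A(T_\Po, T_\T)$, computed tube-by-tube, reproduces the tensor-product formula $L_\lambda(x,a_\lambda) \otimes_k E_\lambda(y)$ defining a tubular extension. This requires an induction along the rays of $\T_\lambda$ using the Auslander-Reiten sequences and the separation property towards $\Po_A$. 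Finally, the domestic condition on $n_B$ follows from the observation that any non-domestic tubular type yields a tubular (in particular, non-tilted-of-euclidean-type) algebra, so only the types $(p,q)$, $(2,2,r)$, $(2,3,3)$, $(2,3,4)$, $(2,3,5)$ can occur.
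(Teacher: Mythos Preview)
The paper does not prove this theorem; it is quoted verbatim as Ringel's structure theorem \cite[Theorem 4.9, p.~241]{Ri}, so there is no argument in the paper to compare your proposal against. Your outline follows a natural strategy---analyse the tilting $A$-module $T$ according to the decomposition $\Po_A\cup\T_A\cup\Q_A$---and this is indeed how Ringel's proof proceeds, but several of your steps contain genuine gaps.

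The most serious one is the construction of $K$. You complete $T_\Po$ to a postprojective tilting $A$-module $T'$ and set $K=\End_A(T')$; but the indecomposable summands of $T'$ that do not lie in $T$ correspond to no object of $C=\End_A(T)$, so this $K$ is not a subcategory of $C$ at all, and your convexity argument (``$\Hom_A(\T_A,\Po_A)=0$ forces paths to stay inside $K$'') is vacuous for those extra objects. The correct candidate is $K=\End_A(T_\Po)$, which \emph{is} a full subcategory of $C$ precisely because $\Hom_A(T_\T,T_\Po)=0$; the real work is then to show that $T_\Po$ is a tilting module over a smaller tame hereditary algebra (obtained, for instance, as the perpendicular category $T_\T^{\perp}$), and that this smaller algebra is again of euclidean type. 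Your APR-completion sidesteps rather than addresses this point. Two secondary issues: the implication ``$C$ representation-infinite $\Rightarrow T_\T\neq 0$'' is false, since a postprojective $T$ yields a tame concealed (hence representation-infinite) $C$ with $T_\T=0$; and the dichotomy $T_\Po=0$ or $T_\Q=0$ is asserted via an unspecified ``numerical contradiction'', whereas in Ringel's treatment it comes from the structure of the torsion pair induced by $T$ relative to the separating family $\T_A$, not from a Grothendieck-group rank count.
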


As a consequence of Ringel's theorem, we obtain the following.

\begin{lemma}\label{lem 5.2}
Let $C$ be a tilted algebra of euclidean type which is not constricted.  Then $C$ is given by one of the following two bound quivers, or their duals.

\begin{itemize}
\item[(1)]

\[
\xy
\xymatrix"m"@C5pt@R5pt{{\scriptstyle3}\!\!&\!\! {\xy 
(0,0)*{}; (6,6)*{} **\dir{-};
(0,0)*{}; (6,-6)*{} **\dir{-};
(6,6)*{}; (6,-6)*{} **\dir{-};\endxy 
}\\
{\scriptstyle4}\!\!&\!\! {\xy 
(0,0)*{}; (6,6)*{} **\dir{-};
(0,0)*{}; (6,-6)*{} **\dir{-};
(6,6)*{}; (6,-6)*{} **\dir{-};\endxy 
}\\
{\scriptstyle5}\!\!&\!\! {\xy 
(0,0)*{}; (6,6)*{} **\dir{-};
(0,0)*{}; (6,-6)*{} **\dir{-};
(6,6)*{}; (6,-6)*{} **\dir{-};\endxy 
}
}
\POS-(20,16)
\xymatrix"n"{
   \scriptstyle 2
\ar@{<-}["m"]^\alpha  \ar@{<-}["m"d]^\beta  \ar@{<-}["m"dd]^\epsilon&
}
\POS-(30,0)
\xymatrix{\scriptstyle 1 
  \ar@<2pt>@{<-}["n"]^\delta\ar@<-2pt>@{<-}["n"]_\gamma
}
\endxy
\]

where the triangles are branches, possibly empty, bound by $\alpha \delta = 0, \beta \delta = \beta \gamma, \epsilon \gamma = 0$, and the branch relations.
\item[(2)]
\[ \xy\xymatrix"m"@C5pt@R5pt{{\scriptstyle p+2}\!\!&\!\! {
\xy 
(0,0)*{}; (5,5)*{} **\dir{-};
(0,0)*{}; (5,-5)*{} **\dir{-};
(5,5)*{}; (5,-5)*{} **\dir{-};\endxy 
}} 
\POS-(96.5,-10)
\xymatrix@C20pt@R20pt{&\ar[d]_{\lambda_{p-1}}{\xy 
(0,0)*{}; (5,5)*{} **\dir{-};
(0,0)*{}; (-5,5)*{} **\dir{-};
(-5,5)*{}; (5,5)*{} **\dir{-};\endxy 
} &
{\xy 
(0,0)*{}; (5,5)*{} **\dir{-};
(0,0)*{}; (-5,5)*{} **\dir{-};
(-5,5)*{}; (5,5)*{} **\dir{-};\endxy 
}\ar[d]_{\lambda_{p-2}} &
{\xy 
(0,0)*{}; (5,5)*{} **\dir{-};
(0,0)*{}; (-5,5)*{} **\dir{-};
(-5,5)*{}; (5,5)*{} **\dir{-};\endxy 
}\ar[d]_{\lambda_{2}}
&
{\xy 
(0,0)*{}; (5,5)*{} **\dir{-};
(0,0)*{}; (-5,5)*{} **\dir{-};
(-5,5)*{}; (5,5)*{} **\dir{-};\endxy 
}\ar[d]_{\lambda_{1}}
&&~~
\\
 &  \scriptstyle 2\ar[dl]^{\delta_p} 
& \scriptstyle 3\ar[l]^{\delta_{p-1}}& \scriptstyle p-1\ar@{.}[l]  & \scriptstyle p\ar[l]^{\delta_2} \\
  \scriptstyle 1&   &  &  &  &   \scriptstyle p+1 \ar[lllll]^\gamma \ar[ul]^{\delta_1}
\ar@{<-}["m"uulllll]^\alpha&
{\xy 
(0,0)*{}; (5,5)*{} **\dir{-};
(0,0)*{}; (5,-5)*{} **\dir{-};
(5,5)*{}; (5,-5)*{} **\dir{-};\endxy 
}\ar[l]^\beta 
}
\endxy
\]
where the   triangles are branches, possibly empty, bound by $\alpha \delta_1  \cdots \delta_p=\alpha \gamma, \beta \gamma = 0, \lambda_i \delta_{i+1}=0$ for all $i$ such that $1 \leq i < p$, and the branch relations.
\end{itemize}
\end{lemma}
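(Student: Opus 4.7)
The plan is to combine Ringel's structure theorem stated just above with a case analysis on the tame concealed full convex subcategory $K$ of $C$. Since $C$ is representation-infinite tilted of euclidean type, Ringel's theorem ensures that either $C=K[E_i,L_i]_{i=1}^n$ is a domestic tubular extension, or $C={}_{i=1}^n[E_i,L_i]K$ is a domestic tubular coextension. Since the coextension case yields the dual bound quivers, I may assume up to duality that $C$ is a tubular extension.

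I would first rule out any branch as the source of non-constrainedness. Each branch $L_i$ is, by definition, a full connected subquiver of the infinite binary tree displayed above, bound by every relation of the form $\alpha\beta=0$. Hence, for any arrow $\alpha:x\to y$ lying entirely in a branch, the arrow itself is the unique nonzero path from $x$ to $y$, so $\dim e_xCe_y=1$. Consequently, any arrow $\alpha:i\to j$ with $\dim e_iCe_j\geq 2$ must either have both endpoints in $K$, or arise as the attachment arrow from a branch to $K$.

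The core of the argument is then a case analysis on the possible witnesses, resting on the Happel--Vossieck classification of tame concealed algebras. One witness type is the presence of a pair of parallel arrows $i\rightrightarrows j$ in the quiver of $C$; since branches contain no parallel arrows, such a pair must sit inside $K$, and the classification forces $K$ to contain the Kronecker algebra (type $\tilde{\mathbb{A}}_1$) as a full subcategory. A domestic tubular extension of Kronecker then distributes branches over three exceptional points of $\mathbb{P}^1(k)$; normalizing at $0$, $1$ and $\infty$ produces precisely bound quiver (1), with the relations $\alpha\delta=0$, $\beta\delta=\beta\gamma$ and $\epsilon\gamma=0$ forced by these three normalizations. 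The other witness type is a commutativity-type relation $\alpha\gamma=\alpha\delta_1\cdots\delta_p$ making the arrow $\gamma$ and the longer path $\delta_1\cdots\delta_p$ linearly independent in $e_{p+1}Ce_1$ while becoming parallel after left-multiplication by $\alpha$. A case analysis shows that this forces $K$ to be the hereditary tame concealed algebra of type $\tilde{\mathbb{A}}_p$ whose quiver is the cycle $1\leftarrow 2\leftarrow\cdots\leftarrow p+1$ together with the chord $\gamma:p+1\to 1$; tubularly extending by branches attached via the appropriate exceptional simple regular modules then yields bound quiver (2).

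The main obstacle lies in this case analysis: one must use the Happel--Vossieck list, the description of exceptional simple regular modules for each tame concealed $K$, and the domesticity of the tubular type $n_C$ in order to check both that no other tame concealed subcategory $K$ admits a tubular extension giving rise to a non-constrained $C$, and that the admissible shapes and sizes of the attached branches are precisely those displayed in the two bound quivers (1) and (2).
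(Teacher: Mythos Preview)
Your overall strategy is plausible but takes a substantially heavier route than the paper, and the step you flag as ``the main obstacle'' is exactly where your argument is incomplete: you outline a case analysis over the Happel--Vossieck list but do not carry it out. That analysis is genuinely lengthy, and without it your proposal remains a plan rather than a proof.

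The paper sidesteps the Happel--Vossieck classification altogether. Starting from an arrow $\gamma:x\to y$ with $\dim C(x,y)\ge 2$ (tameness forces equality with~$2$), it takes $K'$ to be the convex envelope of $x$ and $y$ inside $C$. Since $K'$ is a full convex subcategory of a tilted algebra, it is itself tilted, and the paper then applies its own Lemma~\ref{13} (the statement that a relation extension contains no walk $\alpha w'\beta$ with $\alpha,\beta$ new and $w'$ old with no subpath antiparallel to a new arrow) to rule out all shapes for $K'$ except the $\tilde{\mathbb{A}}$-cycle consisting of $\gamma$ together with a single parallel path. This $K'$ is hereditary, hence coincides with the unique tame concealed core $K$ guaranteed by Ringel's theorem. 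Once $K$ is known to be hereditary of type $\tilde{\mathbb{A}}$, the two bound quivers (1) and (2) drop out from the possible domestic branch extensions.

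So the key difference is this: you propose to identify $K$ by running through the classification of tame concealed algebras and checking, for each, whether a non-constrained tubular extension can occur; the paper instead identifies $K$ intrinsically, using an internal lemma about tilted algebras (Lemma~\ref{13}) to force $K$ to be hereditary of type $\tilde{\mathbb{A}}$. Your approach would work if completed, but it trades a short argument for a long external case check; the paper's approach is both shorter and self-contained.
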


\begin{proof}
Assume $C$ is a tilted algebra of euclidean type which is not constricted.  Then there exists an arrow $\gamma: x \to y$ such that $\dim C(x,y) \geq 2$.  Since $C$ is tame, we actually have $\dim C(x,y)= 2$.  In particular, $C$ is representation-infinite.  Applying Ringel's theorem, we get that $C$ is, up to duality, a domestic tubular extension of a unique tame concealed full convex subcategory $K$ of $C$.  On the other hand, let $K'$ be the convex envelope of the points $x,y$ in $C$.  Then $K'$ is of the form

\[ \xymatrix@R10pt@C50pt{ & \scriptscriptstyle \bullet \ar[ddl] &  \ar[l]\scriptscriptstyle \bullet &\scriptscriptstyle \bullet \ar@{.}[l]  & \scriptscriptstyle \bullet\ar[l]\\
& \scriptscriptstyle \bullet \ar[dl] &  \ar[l]\scriptscriptstyle \bullet &\scriptscriptstyle \bullet \ar@{.}[l]  & \scriptscriptstyle \bullet\ar[l] \\
y   &  &  &  & & x \ar[lllll]^\gamma \ar[uul] \ar[ul]} \]
with $\dim K'(x,y)=2$.  Note that $K'$ is a full convex subcategory of $C$, hence it is tilted (because of \cite[III.6.5 p.146]{Ha1}).  Applying Lemma \ref{2.4} to $K'$, we deduce that $K'$ is of the form

\[ \xymatrix@R10pt@C50pt{ & \scriptscriptstyle \bullet \ar[dl] &  \ar[l]\scriptscriptstyle \bullet &\scriptscriptstyle \bullet \ar@{.}[l]  & \scriptscriptstyle \bullet\ar[l] \\
y   &  &  &  & & x \ar[lllll]^\gamma  \ar[ul]} \]
Since $K'$ is hereditary, we get that $K'=K$.  The statement now follows by considering the possible branch extensions of $K$.
\qed \end{proof}

\medskip

\begin{lemma}\label{lem 5.3} Let $B$ be a cluster-tilted algebra of euclidean type. Assume that there exists no constricted tilted algebra $C$ such that $B=\tilde C$.
 Then $B$ is a cluster-tilted algebra of type $\tilde{\mathbb{A}}$ of one of the following forms or their duals:
 
 \begin{itemize}
\item[\textup{(i)}] 
\[
\xy
\xymatrix"m"@C5pt@R5pt{
{\scriptstyle4}\!\!&\!\! {\xy 
(0,0)*{}; (6,6)*{} **\dir{-};
(0,0)*{}; (6,-6)*{} **\dir{-};
(6,6)*{}; (6,-6)*{} **\dir{-};\endxy 
}}\\
\POS-(28,0)
\xymatrix"n"{
   \scriptstyle 2 \ar@{<-}["m"]^\beta  &
}
\POS-(30,0)
\xymatrix{\scriptstyle 1 \ar@/_30pt/["m"]^\epsilon 
  \ar@<2pt>@{<-}["n"]^\delta\ar@<-2pt>@{<-}["n"]_\gamma
}
\endxy
\]

\item[\textup{(ii)}]
\[ \xy

\xymatrix@C20pt@R20pt{&\ar[d]|{\lambda_{p-1}}{\xy 
(0,0)*{}; (5,5)*{} **\dir{-};
(0,0)*{}; (-5,5)*{} **\dir{-};
(-5,5)*{}; (5,5)*{} **\dir{-};\endxy 
} &
{\xy 
(0,0)*{}; (5,5)*{} **\dir{-};
(0,0)*{}; (-5,5)*{} **\dir{-};
(-5,5)*{}; (5,5)*{} **\dir{-};\endxy 
}\ar[d]|{\lambda_{p-2}} &
{\xy 
(0,0)*{}; (5,5)*{} **\dir{-};
(0,0)*{}; (-5,5)*{} **\dir{-};
(-5,5)*{}; (5,5)*{} **\dir{-};\endxy 
}\ar[d]|{\lambda_{2}}
&
{\xy 
(0,0)*{}; (5,5)*{} **\dir{-};
(0,0)*{}; (-5,5)*{} **\dir{-};
(-5,5)*{}; (5,5)*{} **\dir{-};\endxy 
}\ar[d]|{\lambda_{1}}
&&~~
\\
 &  \scriptstyle 2\ar[dl]^{\delta_p} \ar@/^10pt/[ur]|{\mu_{p-2}}
& \scriptstyle 3\ar[l]^{\delta_{p-1}}& \scriptstyle p-1\ar@{.}[l]  \ar@/^10pt/[ur]|{\mu_{1}} & \scriptstyle p\ar[l]^{\delta_2} \\
  \scriptstyle 1\ar@/_20pt/[rrrrrr]|{\,\epsilon\,}
  \ar@/^20pt/[ruu]^{\mu_{p-1}}&   &  &  &  &   \scriptstyle p+1 \ar[lllll]|\gamma \ar[ul]^{\delta_1}
&
{{\scriptstyle p+2}\,\xy 
(0,0)*{}; (5,5)*{} **\dir{-};
(0,0)*{}; (5,-5)*{} **\dir{-};
(5,5)*{}; (5,-5)*{} **\dir{-};\endxy 
}\ar[l]_\beta 
}
\endxy
\]
\end{itemize}
 where the triangles are cluster-tilted algebras of type $\mathbb{A}$, possibly empty, bound by $\beta\gamma=0$, $ \gamma\epsilon=0$, $\epsilon\beta=0$, and, in the case (ii), by the additional relations $\lambda_i\delta_{i+1}=0$, $\delta_{i+1}\mu_{i}=0$, $\mu_i\lambda_i=0$.
\end{lemma}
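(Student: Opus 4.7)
The plan is as follows. By \cite{ABS1}, there exists a tilted algebra $C$ with $B=\tilde C$; fix one. Since tilted algebras of Dynkin type are directed and hence schurian (thus constrained), the hypothesis forces $C$ to be representation-infinite. As $B$ has euclidean type, so does $C$, and Lemma \ref{lem 5.2} therefore applies: up to duality, $C$ is of form (1) or (2) described there.

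The next step is to compute the relation-extension $\tilde C=B$ explicitly in each case. A system of minimal relations for $C$ in form (1) is given by the three central relations $\alpha\delta$, $\beta\delta-\beta\gamma$, $\epsilon\gamma$ together with the length-two monomial branch relations; in form (2), by $\alpha\delta_1\cdots\delta_p-\alpha\gamma$, $\beta\gamma$, the $\lambda_i\delta_{i+1}$ for $1\le i<p$, and the branch relations. By \cite{ABS1}, each minimal relation $\rho$ contributes a new arrow $\alpha_\rho$ in $\tilde C$ antiparallel to $\rho$, and the relations of $\tilde C$ are obtained as the cyclic partial derivatives of the potential $W=\sum_\rho \alpha_\rho\rho$. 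In particular, each branch $L_i$ together with its associated new arrows becomes a cluster-tilted subalgebra of type $\mathbb{A}$ rooted at $a_i$, while the binomial central relations produce the new ``diagonal'' arrows ($\epsilon$ in form (i), $\epsilon$ and the $\mu_i$ in form (ii)) closing up the 3-cycles displayed in the statement.

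The main obstacle is to show that, under our hypothesis, any branch or arrow in form (1) or (2) that does not appear in form (i) or (ii) must actually be absent from $C$. Concretely, if the tilted algebra $C$ of form (1) had, say, a non-trivial branch at vertex $3$ (including the root and the corresponding arrow $\alpha$), one would need to produce, via a suitable mutation of the tilting module in the cluster category of the underlying hereditary algebra, a distinct tilted algebra $C'$ with $\tilde{C'}=B$ that is constrained, contradicting the hypothesis; and similarly for vertex $5$ in form (1) and for the analogous ``outer'' positions in form (2). Once the redundant structure has been eliminated, a direct computation of the cyclic partial derivatives of $W$ yields precisely the stated monomial relations $\beta\gamma=\gamma\epsilon=\epsilon\beta=0$ and their analogues in form (ii). Finally, the type assertion follows from \cite[Lemma 2.5]{ABCP}, which characterizes cluster-tilted algebras of type $\tilde{\mathbb{A}}$ as gentle algebras whose relations are length-two monomials inside 3-cycles; the quivers described in forms (i) and (ii) visibly satisfy this condition. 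Taking duality into account completes the argument.
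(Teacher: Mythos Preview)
Your overall strategy matches the paper's: start from a non-constrained tilted $C$ with $B=\tilde C$, invoke Lemma~\ref{lem 5.2} to place $C$ in form (1) or (2), compute the relation-extension, and then argue that the ``extra'' branches must be absent by exhibiting, when they are present, an alternative constrained tilted algebra $C'$ with $\tilde{C'}=B$.

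The gap is in that last step, which is the heart of the proof. You only \emph{assert} that a suitable $C'$ exists and propose to obtain it ``via a suitable mutation of the tilting module in the cluster category,'' without specifying which mutation(s) or verifying that the result is constrained. The paper does not use mutation at all here. Instead, it writes down $C'$ directly as a bound subquiver of $B$: one deletes from $\tilde Q$ the two parallel arrows $\gamma,\delta$ (and, in form (2), also the $\delta_i$ and $\lambda_i$) and keeps the new arrows $\lambda,\mu,\nu$ (respectively $\epsilon,\delta,\mu_i$) together with the induced relations. One then checks, by exhibiting a complete slice in the Auslander--Reiten quiver, that this $C'$ is a \emph{representation-finite} tilted algebra (of type $\tilde{\mathbb{D}}$ or $\tilde{\mathbb{A}}$, depending on the case), hence schurian, hence constrained. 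This is concrete and avoids any appeal to cluster mutation.

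A second, smaller issue: your case analysis is too coarse. The paper splits form (1) according to how many of the three branches are non-empty (cases (1a), (1b), (1c)) and form (2) according to whether the branch at $p+2$ is non-empty (cases (2a), (2b)); the shape and even the Dynkin type of the replacement $C'$ depend on this. Only the residual cases (1c) and (2b) survive, yielding (i) and (ii). Your sketch does not make these distinctions, and in particular does not explain why, in form (1), it is the simultaneous presence of \emph{two} extra branches (not just one) that allows the constrained replacement.
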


\begin{proof}
Let $B$ be cluster-tilted of euclidean type. Because of \cite{ABS1}, there exists a tilted algebra $C$ such that $$B=\tilde C=C\ltimes\Ext^2(DC,C).$$
The hypothesis says that $C$ is not constricted. Because of Lemma \ref{lem 5.2}, $C$ is given by one of the bound quivers in (1) or (2) above. We examine these cases separately.

(1) Assume $C$ is given by the quiver
\[
\xy
\xymatrix"m"@C5pt@R5pt{{\scriptstyle3}\!\!&\!\! {\xy 
(0,0)*{}; (6,6)*{} **\dir{-};
(0,0)*{}; (6,-6)*{} **\dir{-};
(6,6)*{}; (6,-6)*{} **\dir{-};\endxy 
}\\
{\scriptstyle4}\!\!&\!\! {\xy 
(0,0)*{}; (6,6)*{} **\dir{-};
(0,0)*{}; (6,-6)*{} **\dir{-};
(6,6)*{}; (6,-6)*{} **\dir{-};\endxy 
}\\
{\scriptstyle5}\!\!&\!\! {\xy 
(0,0)*{}; (6,6)*{} **\dir{-};
(0,0)*{}; (6,-6)*{} **\dir{-};
(6,6)*{}; (6,-6)*{} **\dir{-};\endxy 
}
}
\POS-(20,16)
\xymatrix"n"{
   \scriptstyle 2
\ar@{<-}["m"]^\alpha  \ar@{<-}["m"d]^\beta  \ar@{<-}["m"dd]^\epsilon&
}
\POS-(30,0)
\xymatrix{\scriptstyle 1 
  \ar@<2pt>@{<-}["n"]^\delta\ar@<-2pt>@{<-}["n"]_\gamma
}
\endxy
\]
where the triangles are branches, possibly empty, bound by $\alpha \delta = 0$, $ \beta \delta = \beta \gamma$, $ \epsilon \gamma = 0$ and the branch relations.
Observe that, if one of the branches is empty, then it has no root and consequently, the arrow from that root to the point 2 does not exist.

We consider the following subcases:

(1a) Assume none of the branches rooted at 3,4,5 is empty. In this case, we refer to $C$ as $C_1$. Then the corresponding cluster-tilted algebra $B$ is of the form
\[
\xy
\xymatrix"m"@C5pt@R5pt{{\scriptstyle3}\!\!&\!\! {\xy 
(0,0)*{}; (6,6)*{} **\dir{-};
(0,0)*{}; (6,-6)*{} **\dir{-};
(6,6)*{}; (6,-6)*{} **\dir{-};\endxy 
}\\
{\scriptstyle4}\!\!&\!\! {\xy 
(0,0)*{}; (6,6)*{} **\dir{-};
(0,0)*{}; (6,-6)*{} **\dir{-};
(6,6)*{}; (6,-6)*{} **\dir{-};\endxy 
}\\
{\scriptstyle5}\!\!&\!\! {\xy 
(0,0)*{}; (6,6)*{} **\dir{-};
(0,0)*{}; (6,-6)*{} **\dir{-};
(6,6)*{}; (6,-6)*{} **\dir{-};\endxy 
}
}
\POS-(20,16)
\xymatrix"n"{
   \scriptstyle 2
\ar@{<-}["m"]^\alpha  \ar@{<-}["m"d]^\beta  \ar@{<-}["m"dd]|{\,\epsilon\,}&
}
\POS-(30,0)
\xymatrix{\scriptstyle 1
  \ar@<2pt>@{<-}["n"]^\delta\ar@<-2pt>@{<-}["n"]_\gamma 
  \ar@/_20pt/["m"d]_\nu \ar@/^20pt/["m"]^\lambda \ar@/_20pt/["m"dd]_\mu
}
\endxy
\]
where the triangles are cluster-tilted algebras of type $\mathbb{A}$, and there are,  additionally, the relations of $C_1$ and the relations $\lambda\alpha=-\nu\beta$, $\nu \beta=\mu\epsilon$, $\delta\lambda=0$, $\delta\nu=\gamma\nu$ and $\gamma\mu=0$.

Now, this algebra $B$ can be written as $B$=$\tilde C_1'$, where $C_1'$ is given by the quiver
\[
\xy
\xymatrix"m"@C5pt@R5pt{{\scriptstyle3}\!\!&\!\! {\xy 
(0,0)*{}; (6,6)*{} **\dir{-};
(0,0)*{}; (6,-6)*{} **\dir{-};
(6,6)*{}; (6,-6)*{} **\dir{-};\endxy 
}\\
{\scriptstyle4}\!\!&\!\! {\xy 
(0,0)*{}; (6,6)*{} **\dir{-};
(0,0)*{}; (6,-6)*{} **\dir{-};
(6,6)*{}; (6,-6)*{} **\dir{-};\endxy 
}\\
{\scriptstyle5}\!\!&\!\! {\xy 
(0,0)*{}; (6,6)*{} **\dir{-};
(0,0)*{}; (6,-6)*{} **\dir{-};
(6,6)*{}; (6,-6)*{} **\dir{-};\endxy 
}
}
\POS-(20,16)
\xymatrix"n"{
   \scriptstyle 2
\ar@{<-}["m"]^\alpha  \ar@{<-}["m"d]^\beta  \ar@{<-}["m"dd]|{\,\epsilon\,}&
}
\POS-(30,0)
\xymatrix{\scriptstyle 1
  \ar@/_20pt/["m"d]_\nu \ar@/^20pt/["m"]^\lambda \ar@/_20pt/["m"dd]_\mu
}
\endxy
\]
where the triangles are again branches, bound by relations $\lambda\alpha=-\nu\beta$, $\nu \beta=\mu\epsilon$. 

This is easily seen to be a representation-finite tilted algebra of type $\tilde{\mathbb{D}}$ (indeed, one can simply construct the Auslander-Reiten quiver of the algebra and identify a complete slice). In particular, $C_1'$ is constricted, a contradiction.

(1b) Assume that the branch rooted,  say at 4, is empty while the other two are not. In this case,   we refer to $C$ as $C_2$. Then the cluster-tilted algebra $B$ is of the form

\[
\xy
\xymatrix"m"@C5pt@R5pt{{\scriptstyle3}\!\!&\!\! {\xy 
(0,0)*{}; (6,6)*{} **\dir{-};
(0,0)*{}; (6,-6)*{} **\dir{-};
(6,6)*{}; (6,-6)*{} **\dir{-};\endxy 
}\\ \\
{\scriptstyle5}\!\!&\!\! {\xy 
(0,0)*{}; (6,6)*{} **\dir{-};
(0,0)*{}; (6,-6)*{} **\dir{-};
(6,6)*{}; (6,-6)*{} **\dir{-};\endxy 
}
}
\POS-(20,10)
\xymatrix"n"{
   \scriptstyle 2
\ar@{<-}["m"]^\alpha   \ar@{<-}["m"dd]^\epsilon&
}
\POS-(30,0)
\xymatrix{\scriptstyle 1
  \ar@<2pt>@{<-}["n"]^\delta\ar@<-2pt>@{<-}["n"]_\gamma 
   \ar@/^20pt/["m"]^\lambda \ar@/_20pt/["m"dd]_\mu
}
\endxy
\]
where the triangles are cluster-tilted algebras of type $\mathbb{A}$, bound by the relations of $C_2$ and the additional relations $\lambda\alpha=0$, $\delta\lambda=0$,   $\gamma\mu=0$, $\mu\epsilon=0$.

Again, the algebra $B$ can be written as $B=\tilde C_2'$, where $C_2'$ is given by the quiver

\[
\xy
\xymatrix"m"@C5pt@R5pt{{\scriptstyle3}\!\!&\!\! {\xy 
(0,0)*{}; (6,6)*{} **\dir{-};
(0,0)*{}; (6,-6)*{} **\dir{-};
(6,6)*{}; (6,-6)*{} **\dir{-};\endxy 
}\\ \\
{\scriptstyle5}\!\!&\!\! {\xy 
(0,0)*{}; (6,6)*{} **\dir{-};
(0,0)*{}; (6,-6)*{} **\dir{-};
(6,6)*{}; (6,-6)*{} **\dir{-};\endxy 
}
}
\POS-(20,10)
\xymatrix"n"{
   \scriptstyle 2
\ar@{<-}["m"]^\alpha   \ar@{<-}["m"dd]^\epsilon&
}
\POS-(30,0)
\xymatrix{\scriptstyle 1
   \ar@/^20pt/["m"]^\lambda \ar@/_20pt/["m"dd]_\mu
}
\endxy
\] 
where the triangles are branches,
bound by $\lambda\alpha=0, \mu\epsilon=0$ and the branch relations. This is easily seen to be a representation-finite tilted algebra of type $\tilde{\mathbb{A}}$ (see, for instance, \cite{AS}), thus $C_2'$ is constricted, another contradiction.

(1c) If at least two of the branches, say at 4 and 5, are empty, then  we are left with the  quiver (i) of the statement.

(2) Assume $C$ is given by the quiver
\[ \xy\xymatrix"m"@C5pt@R5pt{{\scriptstyle p+2}\!\!&\!\! {
\xy 
(0,0)*{}; (5,5)*{} **\dir{-};
(0,0)*{}; (5,-5)*{} **\dir{-};
(5,5)*{}; (5,-5)*{} **\dir{-};\endxy 
}} 
\POS-(96.5,-10)
\xymatrix@C20pt@R20pt{&\ar[d]_{\lambda_{p-1}}{\xy 
(0,0)*{}; (5,5)*{} **\dir{-};
(0,0)*{}; (-5,5)*{} **\dir{-};
(-5,5)*{}; (5,5)*{} **\dir{-};\endxy 
} &
{\xy 
(0,0)*{}; (5,5)*{} **\dir{-};
(0,0)*{}; (-5,5)*{} **\dir{-};
(-5,5)*{}; (5,5)*{} **\dir{-};\endxy 
}\ar[d]_{\lambda_{p-2}} &
{\xy 
(0,0)*{}; (5,5)*{} **\dir{-};
(0,0)*{}; (-5,5)*{} **\dir{-};
(-5,5)*{}; (5,5)*{} **\dir{-};\endxy 
}\ar[d]_{\lambda_{2}}
&
{\xy 
(0,0)*{}; (5,5)*{} **\dir{-};
(0,0)*{}; (-5,5)*{} **\dir{-};
(-5,5)*{}; (5,5)*{} **\dir{-};\endxy 
}\ar[d]_{\lambda_{1}}
&&~~
\\
 &  \scriptstyle 2\ar[dl]^{\delta_p} 
& \scriptstyle 3\ar[l]^{\delta_{p-1}}& \scriptstyle p-1\ar@{.}[l]  & \scriptstyle p\ar[l]^{\delta_2} \\
  \scriptstyle 1&   &  &  &  &   \scriptstyle p+1 \ar[lllll]^\gamma \ar[ul]^{\delta_1}
\ar@{<-}["m"uulllll]^\alpha&
{\scriptstyle p+3}\,{\xy 
(0,0)*{}; (5,5)*{} **\dir{-};
(0,0)*{}; (5,-5)*{} **\dir{-};
(5,5)*{}; (5,-5)*{} **\dir{-};\endxy 
}\ar[l]_\beta 
}
\endxy
\]where the   triangles are branches, possibly empty, bound by $\alpha \delta_1  \cdots \delta_p=\alpha \gamma$, $\beta \gamma = 0$, $\lambda_i \delta_{i+1}=0$ for all $1 \leq i < p$, and the branch relations.

We consider the following subcases. 

(2a) Assume that none of the branches rooted at $p+1, p+2$ is empty. In this case, we refer to $C$ as $C_3$. Then the corresponding cluster-tilted algebra is of the form\[ \xy\xymatrix"m"@C5pt@R5pt{{\scriptstyle p+2}\!\!&\!\! {
\xy 
(0,0)*{}; (5,5)*{} **\dir{-};
(0,0)*{}; (5,-5)*{} **\dir{-};
(5,5)*{}; (5,-5)*{} **\dir{-};\endxy 
}} 
\POS-(96.5,-10)
\xymatrix@C20pt@R20pt{&\ar[d]|{\lambda_{p-1}}{\xy 
(0,0)*{}; (5,5)*{} **\dir{-};
(0,0)*{}; (-5,5)*{} **\dir{-};
(-5,5)*{}; (5,5)*{} **\dir{-};\endxy 
} &
{\xy 
(0,0)*{}; (5,5)*{} **\dir{-};
(0,0)*{}; (-5,5)*{} **\dir{-};
(-5,5)*{}; (5,5)*{} **\dir{-};\endxy 
}\ar[d]|{\lambda_{p-2}} &
{\xy 
(0,0)*{}; (5,5)*{} **\dir{-};
(0,0)*{}; (-5,5)*{} **\dir{-};
(-5,5)*{}; (5,5)*{} **\dir{-};\endxy 
}\ar[d]|{\lambda_{2}}
&
{\xy 
(0,0)*{}; (5,5)*{} **\dir{-};
(0,0)*{}; (-5,5)*{} **\dir{-};
(-5,5)*{}; (5,5)*{} **\dir{-};\endxy 
}\ar[d]|{\lambda_{1}}
&&~~
\\
 &  \scriptstyle 2\ar[dl]^{\delta_p} \ar@/^10pt/[ur]|{\mu_{p-2}}
& \scriptstyle 3\ar[l]^{\delta_{p-1}}& \scriptstyle p-1\ar@{.}[l]  \ar@/^10pt/[ur]|{\mu_{1}} & \scriptstyle p\ar[l]^{\delta_2} \\
  \scriptstyle 1\ar@/_20pt/[rrrrrr]|\epsilon\ar@/_10pt/["m"uu]|\delta\ar@/^20pt/[ruu]^{\mu_{p-1}}&   &  &  &  &   \scriptstyle p+1 \ar[lllll]|\gamma \ar[ul]^{\delta_1}
\ar@{<-}["m"uulllll]^\alpha&
{\scriptstyle p+3}\,{\xy 
(0,0)*{}; (5,5)*{} **\dir{-};
(0,0)*{}; (5,-5)*{} **\dir{-};
(5,5)*{}; (5,-5)*{} **\dir{-};\endxy 
}\ar[l]_\beta 
}
\endxy
\]
where the triangles are cluster-tilted algebras of type $\mathbb{A}$, bound by the relations of $C_3$ and the additional relations $\epsilon\beta=\delta\alpha$, $\gamma\epsilon=0$, $\delta_1\cdots\delta_p\delta=\gamma\delta$ and $\mu_i\lambda_i+\delta_{i+2}\cdots\delta_p\delta\alpha\delta_1\cdots\delta_i=0$, $\delta_{i+1}\mu_i=0$, 
for all $i$.

Now, this algebra can be written as $B=\tilde C_3'$, where $C_3'$ is given by the quiver
\[ \xy\xymatrix"m"@C5pt@R5pt{{\scriptstyle p+2}\!\!&\!\! {
\xy 
(0,0)*{}; (5,5)*{} **\dir{-};
(0,0)*{}; (5,-5)*{} **\dir{-};
(5,5)*{}; (5,-5)*{} **\dir{-};\endxy 
}} 
\POS-(96.5,-10)
\xymatrix@C20pt@R20pt{&{\xy 
(0,0)*{}; (5,5)*{} **\dir{-};
(0,0)*{}; (-5,5)*{} **\dir{-};
(-5,5)*{}; (5,5)*{} **\dir{-};\endxy 
}
 &
{\xy 
(0,0)*{}; (5,5)*{} **\dir{-};
(0,0)*{}; (-5,5)*{} **\dir{-};
(-5,5)*{}; (5,5)*{} **\dir{-};\endxy 
}
 &
&
{\xy 
(0,0)*{}; (5,5)*{} **\dir{-};
(0,0)*{}; (-5,5)*{} **\dir{-};
(-5,5)*{}; (5,5)*{} **\dir{-};\endxy 
}
&&~~
\\
 &  \scriptstyle 2\ar[dl]^{\delta_p} \ar@/^10pt/[ur]|{\mu_{p-2}}
& \scriptstyle 3\ar[l]^{\delta_{p-1}}& \scriptstyle p-1\ar@{.}[l]  \ar@/^10pt/[ur]|{\mu_{1}} & \scriptstyle p\ar[l]^{\delta_2} \\
  \scriptstyle 1\ar@/_20pt/[rrrrrr]|\epsilon\ar@/_10pt/["m"uu]|\delta\ar@/^20pt/[ruu]^{\mu_{p-1}}&   &  &  &  &   \scriptstyle p+1
\ar@{<-}["m"uulllll]^\alpha&
{\scriptstyle p+3}\,{\xy 
(0,0)*{}; (5,5)*{} **\dir{-};
(0,0)*{}; (5,-5)*{} **\dir{-};
(5,5)*{}; (5,-5)*{} **\dir{-};\endxy 
}\ar[l]_\beta 
}
\endxy
\]
with the inherited relations. This is again seen to be a representation-finite tilted algebra of type $\tilde \mathbb{D}$. In particular, it is constricted,  a contradiction.

(2b) If at least one of the branches, say at $p+2$ is empty, then we are left with the quiver (ii) of the statement.
\qed
\end{proof}

\bigskip

Observe that in the proof of Lemma \ref{lem 5.3}, in each of the cases (1a), (1b) and (2a), we have replaced the original non-constricted tilted algebra $C_1,C_2$ and $C_3$ by a constricted one $C_1',C_2'$ and $C_3'$, respectively.

\begin{lemma}
 \label{lem 5.*}
 With the above notation, for each $i\in\{1,2,3\}$, we have $n_{B,C_i}=n_{B,C_i'}$ and ${\HH}^1(C_i)\cong {\HH}^1(C_i')$.
\end{lemma}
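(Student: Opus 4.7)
The plan is to argue each of the three cases $i\in\{1,2,3\}$ separately, using the explicit descriptions of $C_i$ and $C_i'$ from the proof of Lemma \ref{lem 5.3}.

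For the equality $n_{B,C_i}=n_{B,C_i'}$, we exploit the fact that the common cluster-tilted algebra $B$ admits two presentations $B=\tilde C_i=\tilde C_i'$, corresponding to two cyclically equivalent potentials. By Lemma \ref{relations}, the strongly minimal relations in $\tilde I$ are given (up to the ``old'' relations coming from $\partial_{\alpha_\rho}W$) by the cyclic partial derivatives $\partial_\beta W$ for old arrows $\beta$ relative to each presentation, and these determine the arrow equivalence $\approx$. Direct inspection shows that in each case the new arrows introduced in passing from $C_i$ to $\tilde C_i=B$ all belong to a single equivalence class under $\approx$, and likewise for $C_i'$. For instance, in case (1a), from $B=\tilde C_1$ the new arrows $\lambda,\nu,\mu$ are linked by the relations $\partial_\delta W=\lambda\alpha+\nu\beta$ and $\partial_\gamma W=-\nu\beta+\mu\epsilon$, forcing $\lambda\approx\nu\approx\mu$; whereas from $B=\tilde C_1'$ the new arrows $\gamma,\delta$ are linked by $\partial_\beta W=\delta\nu-\gamma\nu$, forcing $\gamma\approx\delta$. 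In each presentation, the ``core'' part contributes exactly one equivalence class. Since the branches are identical in $C_i$ and $C_i'$, their contribution to the equivalence-class count agrees on both sides. Entirely analogous verifications apply in cases (1b) and (2a).

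For the isomorphism $\HH^1(C_i)\cong\HH^1(C_i')$, the key observation is that $\tilde C_i=\tilde C_i'=B$ exhibits both $C_i$ and $C_i'$ as endomorphism algebras of (images under the canonical functor of) cluster-tilting objects in the cluster category of a common underlying hereditary algebra $A$ of euclidean type (cf.\ \cite{BMRRT,ABS1}). In particular, both are tilted algebras associated to the same hereditary algebra $A$, and hence by Happel's theorem both are derived equivalent to $D^b(\mod A)$; therefore $C_i$ and $C_i'$ are derived equivalent to each other. By Rickard's theorem on the derived invariance of Hochschild cohomology, one obtains $\HH^n(C_i)\cong\HH^n(C_i')$ for every $n$, and in particular for $n=1$.

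The main obstacle lies in the bookkeeping for the equivalence-class count across the three cases, as the two presentations single out rather different sets of new arrows and various branch shapes need to be handled uniformly. However, since the branches occur identically in $C_i$ and $C_i'$, their contributions cancel, and the verification reduces to the small ``core'' subquiver around the non-constrained vertex, where it is short. The isomorphism of Hochschild cohomology groups, in contrast, is conceptual and follows from the derived equivalence of tilted algebras associated to a common hereditary algebra.
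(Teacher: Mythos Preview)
For the equality $n_{B,C_i}=n_{B,C_i'}$, your method of direct inspection coincides with the paper's. However, your uniform assertion that ``in each case the new arrows \ldots\ all belong to a single equivalence class'' is false in case~(1b). There the relations of $C_2$ are the two monomials $\alpha\delta=0$ and $\epsilon\gamma=0$, and those of $C_2'$ are $\lambda\alpha=0$ and $\mu\epsilon=0$; all relations in $B$ containing the corresponding new arrows are monomial as well, so on each side the two core new arrows lie in two \emph{distinct} $\approx$-classes. The paper accordingly records $n_{B,C_2}=n_{B,C_2'}=2$ (with the identical branch contributions on top). This is a computational slip rather than a gap in strategy: once corrected, the argument goes through exactly as in the paper.

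For $\HH^1(C_i)\cong\HH^1(C_i')$ your route is genuinely different. You argue that $B=\tilde C_i=\tilde C_i'$ forces $C_i$ and $C_i'$ to be tilted from a common hereditary algebra, hence derived equivalent by Happel, and then invoke the derived invariance of Hochschild cohomology. The paper instead exhibits, in each of the three cases, a common full convex subcategory $D_i$ of both $C_i$ and $C_i'$ (obtained by deleting the point~$1$, or the points $1,\ldots,p$) and realises $C_i$ (respectively $C_i'$) as an iterated one-point coextension (respectively extension) of $D_i$ by rigid bricks, or Hom-orthogonal direct sums of such; Happel's long exact sequence for one-point extensions then yields $\HH^1(C_i)\cong\HH^1(D_i)\cong\HH^1(C_i')$. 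Your argument is shorter and avoids the case analysis, but it imports two results not cited in the paper: the derived invariance of $\HH^*$ (Rickard/Keller) and the fact that two tilted algebras with isomorphic relation-extensions are tilted from derived-equivalent hereditary algebras. The paper's approach stays within the toolkit already in use, and gives the slightly sharper information that the common value of $\HH^1$ is already realised on the smaller algebra $D_i$.
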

\begin{proof}
The first statement follows immediately from the description of the relations in the respective algebras. Thus $n_{B,C_1}=n_{B,C_1'}=1$, $n_{B,C_2}=n_{B,C_2'}=2$ and $n_{B,C_3}=n_{B,C_3'}=1.$

It suffices to show the second statement. We consider each of the cases as in the proof of Lemma \ref{lem 5.3}.

(1a) Let $D_1$ be the full convex subcategory of $C_1$ (and $C_1'$) generated by all points except the point 1. Then $D_1$ is a representation-finite tilted algebra and $C_1$ (or $C_1'$) is a one-point coextension (or extension, respectively) of $D_1$ by an indecomposable module. This module being a rigid brick, we deduce immediately from Happel's sequence \cite[5.3]{Ha2} that
\[{\HH}^1(C_1)\cong\HH^1(D_1) \cong {\HH}^1(C_1').\]

(1b) Let $D_2$ be the full convex subcategory of $C_2$ (and $C_2'$) generated by all points except the point 1. Then $D_2$ is a representation-finite tilted algebra and $C_2$ (or $C_2'$) is a one-point coextension (or extension, respectively) of $D_2$ by the direct sum of two Hom-orthogonal, rigid bricks $X,Y$ such that $\Ext^1_{D_2}(X,Y)=0$ and $\Ext^1_{D_2}(Y,X)=0$. Again Happel's sequence yields
\[{\HH}^1(C_2)\cong {\HH}^1(D_2)\cong{\HH}^1(C_2').\]

(2a) Let $D_3$ be the full convex subcategory of $C_3$ (and $C_3'$) generated by all points except the points $1,2,\cdots,p$. 
Then there is a sequence 
\[C_3=E_0\supsetneq E_1\supsetneq \cdots \supsetneq E_p=D_3,\]
where $E_i$ is a one-point coextension of $E_{i+1}$. Moreover, each $E_i$ is a direct product of representation-finite tilted algebras and the coextension module is a direct sum of rigid bricks with supports in distinct connected components of $E_i$. Similarly, there is a sequence 
\[C_3'=F_p\supsetneq F_{p-1}\supsetneq \cdots \supsetneq F_0=D_3,\]
where $F_{i+1}$ is a one-point extension of $F_{i}$. Moreover, each $F_i$ is a direct product of representation-finite tilted algebras and the extension module is a direct sum of rigid bricks with supports in distinct connected components of $F_i$.
Therefore easy inductions yield
\[{\HH}^1(C_3)\cong{\HH}^1(D_3)\cong{\HH}^1(C_3').\]
\qed
\end{proof}

\begin{lemma}
 \label{lem 5.4}
 Let $B=\tilde C$ be a non-hereditary cluster-tilted algebra of type $\tilde\mathbb{A}$ of one of the forms of Lemma \ref{lem 5.3} and  $R$ a system of relations for $C$. Then
 \begin{itemize}
\item [\textup{(i)}] If $B$ is of the form (i), then ${\HH}^1(B) = k^{|R|+2}$
\item [\textup{(ii)}] If $B$ is of the form (ii), then ${\HH}^1(B) = k^{|R|+1}$
\end{itemize}
\end{lemma}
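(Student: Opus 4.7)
The plan is to compute $\HH^1(B)$ directly in each form by describing $\Der_0 B$ and $\Int_0 B$ explicitly. Lemma~\ref{lem 5.5} gives $n_{B,C}=|R|$, equal to the number of chordless $3$-cycles in $\tilde Q$, so the target becomes $\dim\HH^1(B)=n_{B,C}+2$ for form (i) and $n_{B,C}+1$ for form (ii).

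I would proceed by induction on the size of the peripheral constrained components (the branch at vertex~$4$ in form (i); the top triangles and the right branch in form (ii)). For the \emph{base case} (peripheral components reduced to the minimum required for $B$ to be non-hereditary), I would list a $k$-basis of $B$ and describe a generic normalized derivation $\tilde\eta$ by its action on arrows: a scalar $\lambda_\alpha$ on each arrow, together with additional ``mixing'' coefficients on the parallel paths of the central part. In form (i) the parallel paths are the two arrows $\delta,\gamma\colon 2\to 1$, yielding mixing coefficients of the form $\tilde\eta(\delta)=a\delta+b\gamma$ and $\tilde\eta(\gamma)=c\delta+d\gamma$; in form (ii) they are $\gamma$ and the composite $\delta_1\cdots\delta_p$, yielding only $\tilde\eta(\gamma)=a\gamma+b\,\delta_1\cdots\delta_p$. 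The monomial relations $\epsilon\beta=\beta\gamma=\gamma\epsilon=0$ (and, in form (ii), $\lambda_i\delta_{i+1}=\delta_{i+1}\mu_i=\mu_i\lambda_i=0$ from the top triangles) cut some of these parameters. A dimension count against $\Int_0 B$, which is parameterized by vertex scalars modulo a global constant, then yields $\HH^1(B)=k^{|R|+2}$ in form (i) and $\HH^1(B)=k^{|R|+1}$ in form (ii) in the base case. The drop from $+2$ to $+1$ in form (ii) comes from the extra relation-compatibility constraint imposed by the longer parallel composite, which forces one of the would-be mixing parameters to coincide with a sum of scalars on the $\delta_i$.

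For the \emph{inductive step}, I would attach a single additional $3$-cycle to an existing peripheral component. Each peripheral piece is a cluster-tilted algebra of type~$\mathbb{A}$, hence schurian and constrained, so the local analysis from the proof of Theorem~\ref{31} applies: the attached $3$-cycle contributes one new equivalence class of new arrows in $\sim$, three new monomial relations of length two, one or two new vertices, and two or three new arrows depending on the gluing. A direct count shows that $\dim\Der_0 B$ and $\dim\Int_0 B$ each grow in such a way that their difference increases by exactly~$1$, matching the simultaneous growth of $|R|$ by~$1$.

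The main obstacle is to verify that the peripheral additions do not disturb the mixing parameters of the central non-constrained part. This decoupling is guaranteed by Lemma~\ref{13}: every strongly minimal relation of $\tilde I$ is confined either to the central part or to a single peripheral $3$-cycle, so the arrow-equivalence classes of new arrows under $\sim$ split cleanly into a central class and one class per peripheral $3$-cycle, and the relation constraints on the mixing coefficients in the central part are untouched by the inductive additions. Combining the base case with the inductive step yields the formulas $\HH^1(B)=k^{|R|+2}$ for form (i) and $\HH^1(B)=k^{|R|+1}$ for form (ii).
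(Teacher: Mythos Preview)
Your approach differs substantially from the paper's. The paper does not compute $\Der_0B$ and $\Int_0B$ by hand nor set up an induction on the peripheral branches; instead it invokes the Cibils--Saor\'{\i}n formula, in the form specialised to gentle cluster-tilted algebras in \cite[Proposition~5.1]{AR},
\[
\dim\HH^1(B)=\dim Z(B)-|\tilde Q_0\,/\!/\,N|+|\tilde Q_1\,/\!/\,N|-|(\tilde Q_1\,/\!/\,N)_e|-\dim\Im R_g,
\]
and evaluates each combinatorial term directly. Since $B$ is gentle, one has $\dim Z(B)=1$, the non-zero oriented cycles reduce to the vertices, $R_g=0$, and the only parallel pairs $(\alpha,w)$ beyond the diagonal ones come from the double arrow in form~(i) or from $\gamma\parallel\delta_1\cdots\delta_p$ in form~(ii). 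The whole computation is then a one-line count, and the identities $|\tilde Q_1|=|Q_1|+|R|$ and $|Q_0|=|Q_1|$ convert the answer to $|R|+2$, respectively $|R|+1$. No induction on the branches is needed, and the distinction between the two forms is isolated in the single admissible pair $(\delta,\gamma)$ present in form~(i) and absent in form~(ii).

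Your route could in principle be carried out, but the sketch has inaccuracies that matter. In form~(i) the relation $\beta\gamma=0$ forces your coefficient $c$ to vanish (one cannot send $\gamma\mapsto\delta$, since $\beta\delta\neq 0$ in $B$); the surviving off-diagonal freedom is the single parameter $b$ in $\tilde\eta(\delta)=a\delta+b\gamma$, unconstrained precisely because $\delta$ occurs in no relation. In form~(ii) the analogous $b$ in $\tilde\eta(\gamma)=a\gamma+b\,\delta_1\cdots\delta_p$ is likewise killed by $\beta\gamma=0$ (as $\beta\delta_1\cdots\delta_p\neq 0$), and there is no arrow parallel to the long composite to supply a free off-diagonal parameter. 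Thus the drop from $+2$ to $+1$ is not a ``coincidence constraint with $\sum_i\lambda_{\delta_i}$'' as you suggest, but simply the disappearance of the one admissible off-diagonal substitution. Finally, Lemma~\ref{13} is not the right tool for your decoupling claim: that lemma concerns walks between two new arrows, whereas what you need here is that $B$ is gentle, so every relation is a length-two monomial confined to a single $3$-cycle, which immediately makes the peripheral and central constraints on a normalised derivation independent.
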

\begin{proof}
 (i) We use the formula of \cite{CS}, as applied to our special situation in \cite[Proposition 5.1]{AR}
 \[\dim {\HH}^1(B)=\dim Z(B)-|\tilde Q_0\, /\!/\,N| +|\tilde Q_1\, /\!/\,N| -|(\tilde Q_1\, /\!/\,N)_e| -\dim \Im R_g. 
 \]
 Here, $Z(B)$ is the centre of $B$, so $\dim Z(B)=1$. Next, $\tilde Q_0\, /\!/\,N$ is the set of non-zero oriented cycles in $(\tilde Q,\tilde I)$ (where, as usual, $B=k\tilde Q/\tilde I$) including points. Then 
 \[|\tilde Q_0\, /\!/\,N| =|\tilde Q_0|= |Q_0|.\]
 Thirdly, $\tilde Q_1\, /\!/\,N$ is the set of pairs $(\alpha,w)$, where $\alpha\in \tilde Q_1$ and $w$ is a non-zero path (of length $\ge 0$) parallel to $\alpha$. This consists of all pairs $(\alpha,\alpha)$, with $\alpha\in \tilde Q_1$ and the two pairs $(\delta,\gamma)$, $(\gamma, \delta)$ arising from the double arrow
 \[\xymatrix@C60pt{1&2 \ar@<2pt>[l]^\gamma\ar@<-2pt>[l]_\delta}.\]
 Thus, $|\tilde Q_1\, /\!/\,N| =|\tilde Q_1|+2.$
 
 Since it is shown in \cite[Proof of Proposition 5.1]{AR} that $R_g=0$, there remains to compute 
 \[ (\tilde Q_1\, /\!/\,N)_e = (\tilde Q_1\, /\!/\,N)\setminus \left((\tilde Q_1\, /\!/\,N)_g \cup (\tilde Q_1\, /\!/\,N)_a\right).\]
 Here:
 \begin{enumerate}
\item $(\tilde Q_1\, /\!/\,N)_g $ is the set of all pairs $(\alpha,w)\in \tilde Q_1\, /\!/\,N $ where $w$ is either a point or a path starting or ending with the arrow $\alpha$. Therefore
\[(\tilde Q_1\, /\!/\,N)_g =\{(\alpha,\alpha)\mid\alpha\in\tilde Q_1\}.\]
 \item  $(\tilde Q_1\, /\!/\,N)_a $ is the set of all pairs $(\alpha,w)\in \tilde Q_1\, /\!/\,N $ where, in each relation where $\alpha $ appears, replacing $\alpha$ by $w$ yields a zero path. Therefore
\[(\tilde Q_1\, /\!/\,N)_a =\{(\alpha,\alpha)\mid\alpha\in\tilde Q_1\}\cup \{(\delta,\gamma)\}.\]
\end{enumerate}
This implies that 
\[|(\tilde Q_1\, /\!/\,N) |
-|(\tilde Q_1\, /\!/\,N)_e | =
|(\tilde Q_1\, /\!/\,N)_g
\cup (\tilde Q_1\, /\!/\,N)_a |
=|\tilde Q_1|+1.\]
Therefore 
\begin{eqnarray*}
 \dim {\HH}^1(B)&=&1-|\tilde Q_0|+|\tilde Q_1 | +1 \\
 &=&1-| Q_0|+| Q_1 | +|R| +1 \\
 &=& |R|+2,
\end{eqnarray*}
because $|\tilde Q_1|=|Q_1|+|R|$ and $|Q_0|=|Q_1|.$

(ii) For this case again $\dim Z(B)=1$ and $|(\tilde Q_0\, /\!/\,N) | =|\tilde Q_0 |=|Q_0|$.
Here
\[ \tilde Q_1\, /\!/\,N = \{(\alpha,\alpha) \mid\alpha\in\tilde Q\}\cup \{(\gamma,\delta_1\cdots\delta_p)\}.\]
Now, as before
\[(\tilde Q_1\, /\!/\,N)_g =\{(\alpha,\alpha)\mid\alpha\in\tilde Q_1\},\]
while 
\[(\tilde Q_1\, /\!/\,N)_a =\{(\alpha,\alpha)\mid\alpha\in\tilde Q_1\},\]
so that 
\[|(\tilde Q_1\, /\!/\,N) |
-|(\tilde Q_1\, /\!/\,N)_e | =
|(\tilde Q_1\, /\!/\,N)_g
\cup (\tilde Q_1\, /\!/\,N)_a |
=|\tilde Q_1|.\]
Therefore 
\begin{eqnarray*}
 \dim {\HH}^1(B)&=&1-|\tilde Q_0|+|\tilde Q_1 | \\
 &=&1-| Q_0|+| Q_1 | +|R| \\
 &=& |R|+1,
\end{eqnarray*}
because $|\tilde Q_1|=|Q_1|+|R|$ and $|Q_0|=|Q_1|.$
\qed
\end{proof}

\medskip
Note that the previous proof can also be done applying the formula in \cite{RR}.

\section{Hochschild cohomology of  tame cluster-tilted algebras}

We need a few results from \cite{AR} and \cite{ABIS} which we now recall.  Let $B$ be a split extension of a subalgebra $C$ by a two-sided bimodule $_C E_C$, that is,
{ let $B$ have the $k$-vector space structure of $C \oplus E$ with the multiplication given by
$$(c,x)(c',x')= (cc', cx'+xc'+xx')$$
for $(c,x),(c',x') \in B$.  Then there exists a short exact sequence of $C-C$-bimodules
\[ \xymatrix{ 0 \ar[r] & E \ar[r] & B \ar[r]^p  & C \ar[r] & 0 }\]
where $p: (c,x) \mapsto c$ is an algebra morphism, and there is a morphism $q: c \mapsto (c,0)$ of $C-C$- bimodules (but also of algebras) such that $pq=id_C$.} Thus, in particular, a trivial extension $B= C \ltimes E$ is a split extension such that $E^2=0$.  We recall the following result from \cite{AR}.

\begin{lemma}\cite[Lemma 4.1]{AR} If $B$ is a split extension of $C$, then there exists a morphism $\varphi: {\HH}^1(B) \to {\HH}^1(C)$ given by $[\delta] \mapsto [p\delta q]$.
\end{lemma}

The morphism $\varphi$ is shown in \cite{ABIS} to be surjective in case $C$ is a tilted algebra and $B$ is its relation-extension.  In fact we have

\begin{theorem}\cite[Theorem 3.5]{ABIS} \label{42} Let $B$ be  the trivial extension of a tilted algebra $C$ by the relation bimodule $E= \Ext^2_C(DC,C)$, then there exists a short exact sequence of vector spaces
\[ \xymatrix{ 0 \ar[r] &  {\HH}^1(B,E)  \ar[r] &  {\HH}^1(B)   \ar[r]^\varphi  & {\HH}^1(C) \ar[r] & 0. }\]
\end{theorem}

In the sequel we always write $E= \Ext^2_C(DC,C)$.  Our objective in this section is to prove the following theorem.

\begin{theorem}\label{31}
Let $B$ be a  tame cluster-tilted algebra, and $C$ a tilted algebra such that $B=C \ltimes E$.  Then there exists a short exact sequence of $k$-vector spaces
\[ \xymatrix { 0 \ar[r] &  k^{n_{B,C}} \ar[r] & {\HH}^1(B) \ar[r]^\varphi &  {\HH}^1(C)  \ar[r] & 0.} \] 
\end{theorem}

Applying Theorem \ref{42}, it suffices to prove that ${\HH}^1(B,E) = k^{n_{B,C}} $.  We set $n=n_{B,C}$ for simplicity.  Our first task is to prove that ${\HH}^1(B,E)$ can be written in a simpler form.  This is achieved in the following two statements.

\begin{lemma} \label{directa}
Let $B = C \ltimes E$ with $C$ a tilted algebra, then $\Der_0(B,E)= \Der_0(C,E) \oplus \End_{C^e} E$.
\end{lemma}

\begin{proof}  
Let $\delta \in \Der_0(B,E)$, then we can define two $k$-linear maps $d: C \to E$ and $f:E \to E$ by
$$d(c) = \delta (c,0)  \quad \mbox{for all $c \in C$},$$
$$f(x) = \delta (0,x)  \quad \mbox{for all $x \in E$},$$
that is, $d= \delta \vert_C$ and $f=\delta \vert_E$.

We first prove that $d: C \to E$ is a normalized derivation.  Indeed, let $c, c' \in C$ then
\[ \begin{array}{lll}
d(c c') &=& \delta (c c' , 0) = \delta ((c,0) (c',0)) \\ 
&=& (c,0) \delta (c',0) + \delta (c,0) (c',0) \\
&=& c d(c') + d(c) c'.
\end{array}\]
On the other hand, $d(e_i)= \delta (e_i,0) =0$ for every $i$.

Next, we prove that $f:E \to E$ is a morphism of $C-C$-bimodules.  Let $c \in C$ and $x \in E$, then 
\[ \begin{array}{lll}
f(cx)&=& \delta(0,cx) = \delta ((c,0)(0,x))\\ 
&=& (c,0) \delta(0,x) + \delta (c,0) (0,x) \\
&=&c f(x) + d(c) x = c f(x)
\end{array}\]
because $d(c), x \in E$ and $E^2=0$.  Similarly, $f(xc)=f(x)c$.

We have shown that $\Der_0(B,E)=\Der_0(C,E)+\End_{C^e} E$.  But now $\Der_0(C,E) \subseteq \Hom_k(C,E)$ while $\End_{C^e} E \subseteq \Hom_k(E,E)$ and we have an obvious direct sum decomposition 
$$\Hom_k(B,E) = \Hom_k(C,E) \oplus \Hom_k (E,E).$$
Therefore $\Der_0(B,E)= \Der_0(C,E) \oplus \End_{C^e} E$. 
\qed 
\end{proof}

\medskip

\begin{proposition}
Let $B=C \ltimes E$ with $C$ a tilted algebra, then
${\HH}^1(B,E) = {\HH}^1(C,E) \oplus \End_{C^e} E$.
\end{proposition}

\begin{proof}
Because of Lemma \ref{directa}, we have a direct sum decomposition $\Der_0(B,E)= \Der_0(C,E) \oplus \End_{C^e} E$.  We prove that it induces a direct sum decomposition $\Int_0(B,E)= \Int_0(C,E) \oplus 0$ on the level of the inner derivations, that is, if $\delta \in \Int_0(B,E)$ then $d=\delta \vert_C \in \Int_0(C,E)$ while $f=\delta \vert_E =0$.

Assume $\delta \in \Int_0(B,E)$ then there exists $(c,x) \in B=C \oplus E$ such that $\delta = \delta_{(c,x)}$, that is, for all $(c',x') \in B$, we have 
\[ \begin{array}{lll}
\delta (c',x') &=& (c,x) (c',x') - (c',x') (c,x) \\ 
&=& (c c', xc' + c x') - (c' c, x' c + c' x)\\
&=& (c c' - c' c, c x' - x' c + x c' - c' x).
\end{array}\]
But $\delta (c', x') \in E$.  Therefore $c c' - c' c =0$, or $c c' = c' c$.  Since $c'$ is an arbitrary element of $C$, this means that $c$ is in the center of $C$. Now $C$ is a tilted algebra, so it is triangular and therefore $c= \lambda$ is a scalar.  But then
$$\delta (c', x') = xc' - c' x \in E.$$
Now, let $d= \delta \vert_C$ and $f=\delta \vert_E$.  Then, for all $x' \in E$, we have
$$f(x') = \delta (0, x') =0$$
so that $f=0$, as desired.  On the other hand, $d(c') = \delta (c',0) = x c' - c' x = [ x, c' ]$, that is, $d = [x, -] \in \Int_0(C,E)$.  This establishes our claim. But then we have
\[ \begin{array}{lll}
{\HH}^1(B,E) &=& \frac{\Der_0 (B,E)} {\Int_0(B,E)} \simeq  \frac {\Der_0(C,E) \oplus \End_{C^e} E} {\Int_0(C,E) \oplus 0} \\ 
&\simeq &   \frac {\Der_0(C,E)} {\Int_0(C,E)}  \oplus  \End_{C^e} E   \\
&=& {\HH}^1(C,E) \oplus \End_{C^e} E.
\end{array}\]
\qed
\end{proof}

\medskip

We shall now prove that if $C$ is constricted then ${\HH}^1(C,E)=0$.  Our proof will use essentially the tameness of $B$, which implies that $C$ is a tilted algebra of Dynkin or euclidean type.

\begin{lemma} \label{4.6}
Assume that $C$ is tilted and constricted, and that $B = C \ltimes E$ is tame. Then ${\HH}^1(C,E)=0$.
\end{lemma}

\begin{proof}
It suffices to prove that $\Der_0(C,E)=0$.  Assume thus that $\alpha: i \to j$ is an old arrow, then $\delta \in\Der_0(C,E)$ implies $\delta (\alpha) = e_i \delta (\alpha) e_j \in e_i E e_j$.
Now, assume $w \in E$ is non-zero, then $e_i w e_j$ contains a new arrow $\beta$ and we have the following situation
\[ \xymatrix{
i\ar@{~>}[dr]_u\ar[rrr]^\alpha&&&j \\
 & y \ar[r]^\beta& x \ar@{~>}[ur]_v }
\]
with $\beta$ a new arrow and $u, \alpha, v$ old. Let $\rho= \sum_i \lambda_i w_i$ be the relation in $C$ corresponding to $\beta$. Because $C$ is triangular, at least one of the paths $u$ or $v$ is nontrivial.  We have  four cases to consider.
\begin{itemize}
\item[(i)] If the paths $u$ and $v$ share no arrows with any of the $w_i$ then the bound quiver of the full subcategory of $C$ generated by $i, j$ and all the points lying on the summands of $\rho$, contains a sequential walk $w_1  u^{-1} \alpha v^{-1} w_1$, a contradiction to Lemma \ref{2.4}. 

\item [(ii)] The paths $u, v$ cannot be in the situation of intersecting one of the $w_i$ but sharing neither the last arrow of $u$ nor  the first arrow of $v$.  Indeed, assume that this happens, say, with $u$ (the case where it happens with $v$ is treated similarly).  Then we have $u= u' \gamma u''$ with $\gamma$ a subpath of $w_1$, that is, $w_1 = w'  \gamma  w''$
\[ \xymatrix{ 
& i     \ar@{~>}[drrr]^{u'}  \ar[rrrrrr]^{\alpha}  & &&&&  &  j \\
y &  &  \ar@/^1pc/[ll]^{w''}  \ar@/_1pc/[ll]_{u''}  b & &     a     \ar@{~>}[ll]_\gamma     &  & x  \ar@{~>}[ll]_{w'}     \ar@{~>}[ur]_v }\] 
Then $\rho$ is monomial because, otherwise, letting $c$ be any point of $w_2$ distinct from $x,y$, we have a wild full subcategory of $C$
\[ \xymatrix{ i  \ar[rr] \ar[d] & & j \\
a && x \ar[u] \ar[d] \ar[ll]  \\
&& c  } \]
a contradiction.

Next, $u \not = 0$ implies $u'' \not = 0$ and the fact that $\rho$ is a relation implies $w'' \not = 0$.  But $C$ cannot contain two tame concealed full subcategories (namely, corresponding to the cycles $(u'')^{-1} w''$ and $u' (w')^{-1} v \alpha^{-1}$).  Therefore, there is a relation linking $u''$ and $w''$.  In particular, both have length at least two.  Because of \cite[4.7]{AS2} we must have $b=a$.  On the other hand, we have $a \not = x$, because otherwise $\alpha$ is parallel to $u'v$ which contradicts our assumption that $C$ is constricted.  This shows that $C$ contains a full subcategory $D$ of the form 
\[  \xymatrix{  & & i  \ar[rr]^\alpha  \ar[d]_\gamma  && j \\
& d \ar[ld]_\mu & a \ar[ld]^\nu  \ar[l]_\lambda && x \ar[ll]_\delta \ar[u] \\
y & c \ar[l]^\zeta } \]
where we have $\gamma \lambda \mu \not =0$ and $\delta \nu \zeta =0$ (thus $\delta \nu \not =0, \nu \zeta \not =0$) and there is a relation linking $\lambda \mu$ and $\nu \zeta$.  Because $D$ contains no wild full subcategory and must satisfy  \cite[4.7]{AS2}
 we must have $\delta \lambda = 0$ and $\gamma \nu =0$.   Let $D'$ be the full convex subcategory of $D$ consisting of all points except $y$.  Then $D'$ is the one-point coextension  of a tame hereditary algebra of type $\tilde \A_{2,2}$ by two indecomposable regular modules lying on two different tubes of rank $2$.  Therefore, applying \cite[4.7]{Ri}, we get that $D'$ is a tilted algebra of type $\tilde \A_{3,3}$ having a complete slice in the postprojective component.  Now $D$ is the one-point coextension of $D'$ by an indecomposable $D'$-module $M$ such that $\Hom_{D'} (M, I_d) \not = 0$ and $\Hom_{D'} (M, I_c) \not = 0$.  Then $M$ maps nontrivially into two different tubes of $D'$ and so $M$ is postprojective.  Therefore $D$ is a tilted algebra of wild type
\[ \xymatrix{  & \cdot \ar[ld]  & \cdot \ar[l]  \\
\cdot & && \cdot \ar[lu] \ar[ld] & \ar[l]  \cdot \\
& \cdot \ar[lu]  & \cdot  \ar[l] } \] 
a contradiction.

\item[(iii)]  If the paths $u$ and $v$ share the last arrow of $u$ and the first arrow of $v$ with two different $w_i$, say $w_1$ and $w_2$, then, in particular, $\rho$ is not a monomial relation.  We then have the following situation
\[ \xymatrix{
i\ar@{~>}[dddr]_(.25){u_1}\ar[rrr]^\alpha&&&j \\
 & &   a \ar@{~>}[ur]_{v_2} \ar@{~>}[dll]_{w'} \\
y &&& x  \ar@{~>}[ul]_{v_1} \ar@{~>}[dll]_{w''} \\
& b \ar@{~>}[ul]^{u_2}
}
\]
with $u=u_1 u_2, v=v_1 v_2, w_1= w'' u_2$ and $w_2 = v_1 w'$.  Assume first that $\rho$ is not a binomial relation.  Then $C$ contains a full subcategory $C'$ of the form
\[ \xymatrix{
i\ar[dddr]\ar[rrr]^\alpha&&&j \\
 & &   a \ar[ur] \ar[dll] \\
y & \scriptscriptstyle\bullet \ar[l] && \\
& b \ar[ul]
}
\]
and, since $C'$ is wild, we get a contradiction to the tameness of $B$.  Therefore $\rho$ is binomial.  Consider the full subcategory $H$ of $C$ generated by the points $i,j,a,y,b$. Then $H$ is hereditary of type $\tilde \A$ and $C$ contains a full subcategory $C'=H[M]$, where $M$ is the indecomposable $H$-module with semisimple socle  $S_y \oplus S_j$ (or, $S_y \oplus S_j \oplus S_j$ if $u_1$ is trivial) and such that $M/\soc(M) = S_a \oplus S_b$.  Then $M \cong (P_a \oplus P_b)/P_y \cong \tau^{-1} P_y$ is postprojective, and hence $C'=H[M]$ is a tilted algebra of wild type, a contradiction. 

\item[(iv)] If the paths $u$ and $v$ share the last arrow of $u$ and the first arrow of $v$ with the same $w_i$, say $w_1$, then we have the following situation
\[ \xymatrix{
i\ar@{~>}[dr]_{u_1}\ar[rrr]^\alpha&&&j  \\
& b \ar@{~>}[dl]_{u_2} & a \ar@{~>}[l]_{w'}  \ar@{~>}[ur]_{v_2}\\
 y  & && x  \ar@{~>}[lll]\ar@{~>}[ul]_{v_1}  }
\]
with $u=u_1 u_2, v=v_1v_2, w=v_1 w' u_2$. If both $u_2$ and $v_1$ are non-trivial, then $C$ contains a wild full subcategory.  Thus $u_2$ or $v_1$ is trivial.  Assume that $u_2$ is trivial, then $y=b$ and $u=u_1$ (the case where $v_1$ is trivial is entirely similar).  If $\rho$ is not a monomial relation, or the path $v_1$ contains more than one arrow, then $C$ contains a wild full subcategory.  Therefore, $\rho$ is monomial and $v_1$ consists of one arrow $\gamma$.  We have the following situation
\[ \xymatrix{
i\ar@{~>}[dr]_{u=u_1}\ar[rrr]^\alpha&&&j  \\
& y = a_s   & a = a_0  \ar@{~>}[l]_{w'}  \ar@{~>}[ur]_{v_2}\\
  & && x  \ar[ul]_\gamma  }
\]
Let $H$ denote the full subcategory of $C$ generated by the points $i, j$, $a=a_0$, $a_1, \dots, a_s=y$.  Then $H$ is hereditary of type $\tilde \A$, and $C$ contains a full subcategory $C' = H[M]$, where $M$ is the $H$-module with top $S_a$ and socle $S_{a_{s-1}} \oplus S_j$ (or, $S_{a_{s-1}}$ if $v_2$ is trivial). Then $M$ is regular lying in an exceptional tube.  In this case, $C'=H[M]$ is wild, except for $s=1$ in which case it is tilted of type $\tilde \A$,  see \cite{AS}.  Then we have the following situation
\[ \xymatrix{
i\ar@{~>}[drr]_{u}\ar[rrr]^\alpha&&&j = a \ar[dl]_\delta \\
&&  y = a_1   & &  x  \ar[ul]_\gamma  }
\]
In particular, the potential has a summand of the form $w_1= \gamma \delta \beta$, hence $B$ has a relation of the form $\beta \gamma =0$.  But then $e_i w e_j =0$ as required. 
\end{itemize}

\qed
\end{proof} 

\medskip

We next study the $C-C$-bimodule endomorphisms of $E$.  The following notation will be useful.  If $u$ is a subpath of a path $v$ we shall say that $u$ {\it divides} $v$ and write $u \vert v$.  In particular, if an arrow $\alpha$ (or a point $x$) lies on the path $v$, then we write $\alpha \vert v$ (or $x \vert v$, respectively). We use the symbol $\not \vert$ in the obvious way.

{  We recall from \cite {ABIS} that, if $\S_1, \cdots, \S_n$ are the distinct equivalence classes of new arrows, and $E_j$ is the $C-C$-bimodule generated by the elements of $\S_j$, then we have $E = \oplus_{j=1}^n E_j$ (see \cite[4.3]{ABIS}).  }

\begin{lemma}\label{45}
Let $C$ be  constricted and $E = \oplus_{i=1}^n E_i$ be the decomposition induced from the arrow equivalence relation then, for every new arrow $\alpha$ in $E_i$ and every $\delta \in \Hom_{C^e}(E_i, E)$, we have
$$\delta (\alpha) = \lambda_\alpha \alpha$$
for some scalar $\lambda_\alpha \in k$.
\end{lemma}

\begin{proof}
Let us denote by $\{ \alpha_1, \cdots, \alpha_n \}$ the set of all new arrows and by $\{ \rho_{\alpha_1}, \cdots, \rho_{\alpha_n} \}$ the set of corresponding relations in $C$ so that the potential is
$$W= \sum_{i=1}^n \rho_{\alpha_i} \alpha_i .$$
We may assume that the equivalence class  $[ \alpha_1]$ of $\alpha_1$ is $\{ \alpha_1, \cdots, \alpha_r \}$ with $r \leq n$.
For any $i$ with $1 \leq i \leq r$ and $\delta$ as in the statement, we clearly have
$$\delta (\alpha_i) = \sum_j \lambda_{ij} u_{ij} \alpha_j v_{ij}$$
where $\lambda_{ij} \in k$, and $u_{ij}, v_{ij}$ are paths in $C$ such that $\alpha_i$ and $u_{ij} \alpha_j v_{ij}$ are parallel.

We claim that $\lambda_{ik} = 0$ when $k \not = i$, and this implies  $\delta (\alpha_i)= \lambda_{ii} \alpha_i$ because $C$ is triangular.  Without loss of generality, let $i=1$ and assume that $k \not =1$, then $\rho_{\alpha_k} \not = \rho_{\alpha_1}$ and hence there exists an arrow $\gamma_k$ such that $\gamma_k \vert \rho_{\alpha_1}$ but $\gamma_k \! \! \not \vert \rho_{\alpha_k}$.  Deriving the potential $W$ yields
$$\partial_{\gamma_k}(W) = \sum_{s \in S, t \in T, t\not =k} w_{st} \alpha_t w'_{st} \in \tilde I$$
that is, the arrow $\alpha_k$ does not appear in the above sum.  There exists a summand of $\partial_{\gamma_k}(W)$ which is a minimal relation in $\tilde I$ which contains $\alpha_1$ but not $\alpha_k$. Further, because $[\alpha_1] = \{ \alpha_1, \cdots, \alpha_r\}$, those $\alpha_t$ which appear in this minimal relation are such that $t \leq r$.  Let thus this minimal relation be
$$\rho = \sum_{s \in S', t \in T', t\not =k, 1 \in T'} w_{st} \alpha_t w'_{st}.$$
Applying our morphism $\delta: E_i \to E$ yields
$$0=\delta (\rho) = \sum_{s,t,j} \lambda_{tj} w_{st} u_{tj} \alpha_j v_{tj} w'_{st}.$$
The summands for which $t=1, j=k$ are of the form
$$ \lambda_{1k} w_{s1} u_{1k} \alpha_k v_{1k} w'_{s1}.$$
The arrow $\alpha_k$ belongs to the cycle $\alpha_k\rho_{\alpha_k}$ in the potential $W$ and therefore the above summands contain each a subpath of the form
$u \alpha_k v$, 
where $v \epsilon u$ (for some arrow $\epsilon: x \to y$) is a subpath dividing $\rho_{\alpha_k}$.  We split the proof into several cases.
\begin{itemize}
\item [(i)] $x \vert v_{1k}$ and $y \vert u_{1k}$. Then we have the following situation
\[ \xymatrix{ &&&\scriptscriptstyle\bullet\ar@{~>}[dl]_{w_{s1}}&\scriptscriptstyle\bullet\ar[l]_{\gamma_k}\\
&&a \ar@{~>}[dl]_{u'_{1k}} \ar[rrr]^{\alpha_1}&&&b  \ar@{~>}[ul]_{w'_{s1}} \\
&y \ar@{~>}[dl]_{u} &&&&& x  \ar@{~>}[ul]_{v'_{1k}} \ar[lllll]_\epsilon  \\
c \ar[rrrrrrr]^{\alpha_k}&&&&&&& d \ar@{~>}[ul]_{v}
}\]
where $u_{1k} = u'_{1k}u$ and $v_{1k} = v v'_{1k}$.  In this case, we have the relation $\rho_{\alpha_1}$ which involves the path $w'_{s1} \gamma_k w_{s1}$ and perhaps other paths from $b$ to $a$ in $C$.   Now, the paths $v'_{1k}, u'_{1k}$ cannot both be trivial, because otherwise $B$ would contain a $2$-cycle formed by the arrows $\alpha_1$ and $\epsilon$. On the other hand, $v'_{1k}$ and $u'_{1k}$ do not share arrows with the summands of $\rho_{\alpha_1}$ (because of their directions). We consider the full subcategory of $C$ generated by all the points lying on the summands of $\rho_{\alpha_1}$  and the points $x,y$.   We thus get a contradiction to Lemma \ref{2.4} since $(w'_{s1} \gamma_k w_{s1}) u'_{1k} \epsilon^{-1} v'_{1k} (w'_{s1} \gamma_k w_{s1})$ is a sequential walk. 

\item [(ii)] $x \vert w'_{s1}$ and $y \vert w_{s1}$. Then we have the following situation

\[ \xymatrix{ &&&\scriptscriptstyle\bullet\ar@{~>}[ddll]_(.25){w_{s1}}_(.55)y&\scriptscriptstyle\bullet\ar[l]_{\gamma_k}\\
&&\!\scriptscriptstyle\bullet&&&\,\scriptscriptstyle\bullet\ar[lll]_\epsilon \\
&\scriptscriptstyle\bullet\ar@{~>}[dl]_{u_{1k}} \ar[rrrrr]^{\alpha_1}&&&&&\scriptscriptstyle\bullet \ar@{~>}[uull]_(.75){w'_{s1}}_(.45)x \\
\scriptscriptstyle\bullet\ar[rrrrrrr]^{\alpha_k}&&&&&&&\scriptscriptstyle\bullet \ar@{~>}[ul]_{v_{1k}}
}\]
a contradiction to the hypothesis that $C$ is constricted. Notice that $\epsilon \not = \gamma_k$ because $\epsilon \vert \rho_{\alpha_k}$ while $\gamma_k \not \vert \rho_{\alpha_k}$.
\item [(iii)] $x \vert w'_{s1}$ and $y \vert u_{1k}$. Then we have the following situation
\[ \xymatrix{ &&&&\scriptscriptstyle\bullet\ar@{~>}[ddll]_{w_{s1}}&\scriptscriptstyle\bullet\ar[l]_{\gamma_k}\\
&&&&&&x   \ar@{~>}[ul]_{w'} \ar[ddlllll]_(.3)\epsilon \\
&&a \ar@{~>}[dl]_{u'_{1k}} \ar[rrrrr]_{\alpha_1}&&&&& b \ar@{~>}[ul]_{w} \\
& y  \ar@{~>}[dl]_u\\
c \ar[rrrrrrrrr]^{\alpha_k}&&&&&&&&& d \ar@{~>}[uull]_{v_{1k}}
}\]
where $u_{1k}= u'_{1k} u$, $w'_{s1} = w w'$   and $v= v_{1k} w$.
Because $C$ is constricted, there must be a relation in $C$ on the longer path from
$x$ to $y$. This relation together with the arrow $\epsilon$ yields a contradiction to Lemma \ref{2.4}. 

\item [(iv)] $x \vert v_{1k}$ and $y \vert w_{s1}$.  This case is symmetric to the previous one. 
\end{itemize}
This shows that we have no such terms in the sum and therefore $\lambda_{1k}=0$.  This completes the proof of the lemma.
\qed
\end{proof}

\medskip

\begin{corollary}
With the above notation, $\Hom_{C^e} (E_i, E_j) =0$ whenever $i \not = j$.
\end{corollary}

\begin{lemma}
For every $i$, we have that $\End_{C^e} E_i = k$.
\end{lemma}

\begin{proof}
Assume $E_i=<\alpha_1>$ and $[\alpha_1] = \{ \alpha_1, \cdots, \alpha_r\}$.  Because of Lemma \ref{45}, we have, for $\delta \in \End_{C^e} E_i$
$$\delta (\alpha_i) = \lambda_i \alpha_i$$
for some scalar $\lambda_i \in k$.
Now there exists a  strongly minimal relation containing the arrow $\alpha_1$, let it be
$$\rho = \sum_{j \in J} \mu_j w_j \alpha_j w'_j.$$
Applying $\delta$ yields
$$0 = \delta (\rho) = \sum_{j \in J} \mu_j \lambda_j w_j \alpha_j w'_j.$$
Subtracting from this the relation $\lambda_1 \rho$ we get
$$\sum_{j \in J\setminus \{ 1 \} } \mu_j (\lambda_j - \lambda_1)  w_j \alpha_j w'_j = 0.$$
Because $\rho$ is  strongly minimal, we get $\lambda_j = \lambda_1$, for every $j \in J\setminus \{ 1 \}$.  Because the arrow equivalence is transitive, we get $\lambda_j = \lambda_1$ for every $j \in \{1, \cdots, r\}$.
\qed
\end{proof}

\medskip

Now we are ready to prove Theorem \ref{31}.

\medskip

\begin{proof} 
If $C$ is hereditary, then $B=C$, $n_{B,C}=0$ and ${\HH}^1(B)={\HH}^1(C)$. If not, 
 assume first that $C$ is constricted.  Because ${\HH}^1(C,E) =0$, it suffices to prove that $\End_{C^e} E = k^n$.  But we have also shown that 
\[     \Hom_{C^e}(E_i, E_j) = \left \{ \begin{array}{lll}
0 & \mbox{if $i \not = j$}\\
k & \mbox{ if $i=j$.}
\end{array} \right. \]
This implies immediately the statement. 

Otherwise, $C$ is, up to duality, of one of the forms (i) (ii) of Lemma \ref{lem 5.2}. As observed in the proof of Lemma \ref{lem 5.3}, we have two distinct cases:
 
 (a) Either one can replace the non-constricted algebra $C$ by a constricted algebra $C'$ such that $n_{B,C}=n_{B,C'}$ and ${\HH}^1(C)\cong {\HH}^1(C')$ because of Lemma \ref{lem 5.*}. The statement then follows from the previous argument  applied to $B$ and $C'$.
 
 (b) Otherwise $B$ is, up to duality, of one of the forms (i) (ii) of Lemma \ref{lem 5.3}. Note that there exist several tilted algebras $C$ having $B$ as a relation-extension. However, because of Lemma \ref{lem 3.3}, the cardinality $|R|$ of a system of relations $R$ for each such tilted algebra $C$ is independent of the choice of $C$. Moreover, in this case, $n_{B,C}=|R|$, by Lemma \ref{lem 5.5}.
 
 Using Lemma \ref{lem 5.4}, it suffices to prove that, if $C$ is of the form (i), then ${\HH}^1(C)=k^2$ and, if $C$ is of the form (ii), then ${\HH}^1(C)=k$.
 This follows from another straightforward application of Happel's sequence. 
\qed
\end{proof}  

\medskip

\begin{example}
Let $C$ be the tilted algebra of euclidean type $\tilde{\mathbb{A}}_{2,2}$ given by the quiver
\[\xymatrix{&2\ar[ld]_\beta\\ 1&& 4\ar[lu]_\alpha\ar[ld]^\gamma\\ &3\ar[lu]^\delta}\]
bound by the relations $\alpha\beta=0$ and $\gamma\delta=0$.   The corresponding cluster-tilted algebra $B$ is given by the quiver
\[\xymatrix{&2\ar[ld]_\beta\\ 1\ar@<-2pt>[rr]_\sigma\ar@<2pt>[rr]^\epsilon&& 4\ar[lu]_\alpha\ar[ld]^\gamma\\ &3\ar[lu]^\delta}\]
bound by the relations $\alpha\beta=\beta\epsilon=\epsilon\alpha=0$ and $\gamma\delta=\delta\sigma=\sigma\gamma=0$. Note that $B$ is not schurian so the results from \cite{AR} cannot be used. 
The arrow equivalence class in this example consists of the two new arrows $\epsilon$ and $\sigma$, and therefore the relation invariant $n_{B,C}$ is equal to 2. Now Theorem \ref{31} implies that ${\HH}^1(B) \cong {\HH}^1(C)\oplus k^2\cong k^3.$

\end{example}

\medskip
{\ralf The following result has been proved for schurian cluster-tilted algebras in \cite[Corollary 3.4]{AR}. 
The statement is inspired from Skowro\'nski's famous question \cite[Problem 1]{S}: For
which algebras is   simple connectedness  equivalent to the vanishing of the first
Hochschild cohomology group?  }

\begin{theorem} Let $B=k \tilde Q / \tilde I$ be a cluster-tilted algebra.  Then ${\HH}^1(B)=0$ if and only if $B$ is hereditary with ordinary quiver a tree.
\end{theorem}

\begin{proof}
By \cite{ABS3}, the cluster repetitive algebra is a Galois covering of $B$ with infinite cyclic group $\Z$.  Moreover it is connected if and only if $B$ is not hereditary (because of \cite[1.4, Lemma 5]{ABS3}).  Assume thus that $B$ is not hereditary.  Because of the universal property {\ralf of the Galois covering}, there exists a group epimorphism
$$\pi_1(\tilde Q,\tilde I) \to \Z.$$
Let $k^+$ denote the additive group of the field $k$. The previous epimorphism induces a monomorphism of abelian groups $$\Hom (\Z, k^+) \to \Hom(\pi_1(\tilde Q, \tilde I), k^+)$$
which, composed with the canonical monomorphism 
 $\Hom(\pi_1(\tilde Q, \tilde I), k^+) \to HH^1(B)$
of \cite[Corollary 3]{PS}, yields a monomorphism $\Hom (\Z, k^+) \to HH^1(B)$.

Therefore, if $B$ is not hereditary, we have $HH^1(B) \not = 0$.  On the other hand, if $B$ is hereditary, then, because of \cite[1.6]{Ha2}, we have $HH^1(B)=0$ if and only if the quiver $\tilde Q$ of $B$ is a tree. \qed
\end{proof}

\section{The representation-finite case}\label{sect 6}

Throughout this section, let $B$ be a representation-finite cluster-tilted algebra. We present easy methods to compute the relation invariant $n_{B,C}$ and thus ${\HH}^1(B)$ in this case.
Let  $\QB$ be the
quiver of $B$  and let  $n$ be the number of points in $\QB$.

Choose  a tilted algebra $C$ such that $B=C\ltimes \Ext^2_C(DC,C)$. 
The \emph{number of relations} in $C$ is the dimension of
$\Ext^2_C(S_C,S_C) $, where $S_C$ is the sum of  a complete set of representatives of the isomorphism classes of simple $C$-modules. 
We say that a relation $r$ in $B$ is a \emph{new relation}
if it is not a relation in $C$.
It has been shown in \cite[Corollary 3.3]{AR} that in this case $n_{B,C}$ is equal to the number of relations in $C$
minus the number of new commutativity relations in $B$, and, moreover,
 \[ \textup{HH}^1(B)=k^{n_{B,C}}.\]
In particular, the integer $n_{B,C}$ does not depend on the choice of the tilted algebra $C$, and therefore we shall denote it in the rest of this section by $n_B$. 
The objective of this section is to show that one can read off the integer $n_B$ from
the quiver $\QB$ of $B$.

Recall from \cite{BGZ} that a \emph{chordless cycle} in $\QB$ is a full subquiver induced by a set
of points $\{x_1,x_2,\ldots,x_p\}$ which is topologically a cycle,
that is, the edges in the chordless cycle are precisely the edges
$x_i\,\frac{\quad}{}\,x_{i+1}$.

\begin{lemma}\label{lem 1}
The number of chordless cycles in $\QB$ 
is equal to the number of zero relations in $C$ plus twice the number
of commutativity relations in $C$.
\end{lemma}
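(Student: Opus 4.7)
The plan is to establish a bijection-with-multiplicity between a minimal system of relations for $C$ and the chordless cycles in $\QB$. First, I would use that $C$ is a tilted algebra of Dynkin type (because $B$, and hence $C$, is representation-finite), so that every minimal relation in $C$ is either monomial (a zero relation $w=0$) or binomial with two parallel paths (a commutativity relation $u-\lambda v$ with $\lambda\in k^*$). By \cite{ABS1} there is a bijection between the elements of a minimal system of relations $R$ in $C$ and the new arrows in $\QB$: to a relation $\rho\in R$ with source $x$ and target $y$ is associated a unique new arrow $\alpha_\rho\colon y\to x$.

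Next, I would show that each chordless cycle in $\QB$ contains exactly one new arrow. Since the quiver $Q$ of $C$ is acyclic, any oriented cycle in $\QB$ must contain at least one new arrow. That it contains \emph{exactly} one follows from Lemma \ref{13}: a walk of the form $\alpha w'\beta$ with $\alpha,\beta$ new arrows and $w'$ a walk of old arrows with no subpath antiparallel to a new arrow cannot occur, and within a chordless cycle the old part between two consecutive new arrows would necessarily be of this forbidden type. Hence every chordless cycle is of the shape $w\alpha_\rho$, where $w$ is a path of old arrows from $x$ to $y$ and $\alpha_\rho$ is the unique new arrow appearing in it.

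I would then count the chordless cycles attached to each new arrow $\alpha_\rho$. If $\rho=w$ is a zero relation, then $w\alpha_\rho$ is a chordless cycle (chordlessness uses the minimality of $\rho$: a chord would produce a strictly shorter relation, contradicting that $\rho$ is in a minimal system, or would produce a second new arrow contradicting the previous step), and it is the only one, because any path of old arrows $w''$ parallel to $w$ with $w''\alpha_\rho$ appearing as a cycle in $\QB$ would provide a second monomial in the relation determined by $\alpha_\rho$, contradicting that $\rho$ is monomial. If instead $\rho=u-\lambda v$ is a commutativity relation, then both $u\alpha_\rho$ and $v\alpha_\rho$ are chordless cycles (again by minimality of $\rho$ together with Lemma \ref{13}), and these are the only two, since any further parallel path of old arrows would yield a third monomial in a minimal relation of $C$, which is ruled out by the Dynkin type. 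Consequently each zero relation contributes one chordless cycle and each commutativity relation contributes two.

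Finally, I would verify that every chordless cycle of $\QB$ is obtained in this way: given a chordless cycle $w\alpha$, Lemma \ref{relations} applied to the cyclic derivative along the unique new arrow $\alpha$ of the potential identifies $w$ with a monomial appearing in some $\rho\in R$; by the dichotomy above $\rho$ is either zero or commutative, and $w\alpha$ was already counted in the previous paragraph. The main obstacle is verifying chordlessness of the cycles and uniqueness of the new arrow in each cycle; both reduce to Lemma \ref{13}, combined with the fact that in the representation-finite case minimal relations have at most two monomials, which prevents hidden shortcuts.
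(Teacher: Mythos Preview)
Your overall strategy---pairing new arrows with chordless cycles and counting how many cycles pass through each---matches the paper's, and your use of Lemma~\ref{13} to show that every chordless cycle carries exactly one new arrow is a legitimate alternative to the paper's citation of \cite[Corollary~3.7]{BR} (the chordlessness indeed guarantees the hypothesis ``no subpath antiparallel to a new arrow'').

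The gap is in the counting step. When $\rho=w$ is a zero relation and $w''$ is another old path with $w''\alpha_\rho$ a chordless cycle, you assert this ``would provide a second monomial in the relation determined by $\alpha_\rho$''. But $\rho$ is a \emph{fixed} element of the chosen system $R$; the mere existence of a parallel old path $w''$ does not force $w''$ to occur as a summand of $\rho$. The same objection applies in the commutativity case (a third parallel path need not appear in $\rho$). To make your argument work you would have to invoke the description of the ideal of $B$ via chordless cycles from \cite{CCS2,BMR2} and compare it with $\partial_{\alpha_\rho}W=\rho$; only then can you conclude that the monomials of $\rho$ are exactly the old paths completing $\alpha_\rho$ to a chordless cycle. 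The paper avoids this circularity by a different route: for a zero relation, a second parallel old path $\rho'$ gives two parallel paths in $Q$ with $\rho-\rho'\notin I$, so $\pi_1(Q,I)\ne 0$, contradicting the simple connectedness of tilted algebras of Dynkin type \cite{L}; for a commutativity relation, planarity of $\QB$ \cite[Theorem~A1]{CCS2} caps the number of chordless cycles through any arrow at two. You also tacitly use that chordless cycles in $\QB$ are oriented (so that removing the new arrow leaves a directed path of old arrows); the paper cites \cite[Proposition~9.7]{FZ2} for this.
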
  
\begin{proof}
Consider the map
$\{ \textup{relations in } C\} \to \{\textup{new arrows in } B\}$
 that associates to a relation $\rho\in\Ext^2_C(S_i,S_j)$ the new arrow
 $\alpha(\rho):j\to i$. 
By  \cite[Corollary 3.7]{BR}, every chordless cycle 
 contains exactly one new arrow, and therefore it suffices to show that if $\rho$ is a
commutativity relation, then $\alpha(\rho)$ lies in
precisely two chordless cycles in $\QB$, and  if $\rho$ is  a zero
relation, then  $\alpha(\rho)$ lies in precisely one chordless cycle in $\QB$.

If $\rho$ is a commutativity relation, say $\rho=c_1-c_2$
where $c_1,c_2$ are paths from $i$ to $j$ in $\QC$, then  the
 concatenations  $\alpha(\rho) c_1$ and $\alpha(\rho) c_2$ are two chordless
 cycles. Then it follows from the fact that $\widetilde{Q}$ is a planar quiver (see \cite[Theorem A1]{CCS2}), that $\alpha(\rho)$  lies in precisely  two
 chordless cycles.

Otherwise, $\rho$ is a zero relation in $C$, and $\alpha(\rho)\rho $ is a chordless cycle
in $\QB$. We have to show that $\alpha(\rho)$ does not lie in two chordless
cycles. Suppose the contrary.
{Because of  \cite[Proposition 9.7]{FZ2}, every chordless cycle in $\QB$
is oriented. Therefore there exists another path $\rho'$ from $i$ to $
j$ in $\QB$ such that $\alpha( \rho) \rho'$ is a
chordless cycle. If $\rho'$ is also a path in $Q,$ then $\rho$ and $
\rho'$ are two parallel paths whose difference $\rho-\rho'$ is not
a relation in $C.$ This implies that the fundamental group of $C$ is
non-trivial, and this contradicts the well-known fact that tilted algebras
of Dynkin type are simply connected (see, for instance, \cite{L}). On the other
hand, if $\rho'$ is a path in $\QB$ but not in $Q$ then it
must contain at least one new arrow. But then the chordless cycle $\alpha
( \rho) \rho'$ contains two new arrows, a contradiction to
\cite[Corollary 3.7]{BR}.
}
\qed
\end{proof}  

\bigskip

An arrow in $\QB$ is called
\emph{inner arrow} if it is contained in two chordless cycles. Arrows
which are not inner arrows are called \emph{outer arrows}.
\begin{lemma}\label{lem 2}
The number of new inner arrows in $B$ is equal to the number of
commutativity relations in $C$.
\end{lemma}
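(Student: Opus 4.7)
The plan is to invoke directly the analysis already performed in the proof of Lemma \ref{lem 1}. There we established a bijection between the relations of $C$ and the new arrows of $B$, sending each relation $\rho$ to the new arrow $\alpha(\rho)$, and we further determined exactly how many chordless cycles of $\tilde Q$ pass through $\alpha(\rho)$: precisely one if $\rho$ is a zero relation, and precisely two if $\rho$ is a commutativity relation.

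Now, by definition, an arrow of $\tilde Q$ is inner if and only if it belongs to two chordless cycles. Combining this with the two bullet points above, a new arrow $\alpha(\rho)$ is inner if and only if the corresponding relation $\rho$ in $C$ is a commutativity relation, while every new arrow coming from a zero relation is outer. Since the correspondence $\rho\mapsto\alpha(\rho)$ is a bijection between the relations in $C$ and the new arrows in $B$, restricting it to commutativity relations yields a bijection between the commutativity relations in $C$ and the new inner arrows of $B$, which gives the desired equality.

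The only point that one might worry about is whether the two chordless cycles attached to a commutativity relation $\rho = c_1-c_2$ are genuinely distinct. This was however taken care of in the proof of Lemma \ref{lem 1}: the two concatenations $\alpha(\rho)c_1$ and $\alpha(\rho)c_2$ are distinct chordless cycles because $c_1\neq c_2$ as paths in $\QC$, and the planarity result \cite[Theorem A1]{CCS2} guarantees that there are no further chordless cycles through $\alpha(\rho)$. Thus the proof reduces to a one-line invocation of Lemma \ref{lem 1} and the definition of an inner arrow, and no new obstacle arises.
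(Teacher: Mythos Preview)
Your proposal is correct and is essentially the same argument as the paper's. The paper's own proof of Lemma~\ref{lem 2} re-derives the converse direction directly (a new inner arrow $\alpha$ yields two paths $\rho,\rho'$ in $\QC$ by \cite[Corollary 3.7]{BR}, and simple connectedness of $C$ forces $\rho-\rho'$ to be a relation), but this is exactly the contrapositive of the ``zero relation $\Rightarrow$ $\alpha(\rho)$ lies in only one chordless cycle'' step already carried out in the proof of Lemma~\ref{lem 1}; your observation that Lemma~\ref{lem 2} follows immediately from that analysis is entirely valid.
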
  

\begin{proof} Each commutavity relation in $C$ gives a new inner arrow in
  $B$. Conversely, suppose that $\alpha$ is a new inner arrow in $B$
  and let $ \rho,\rho'$ be the two paths in $\QB$ such that $\alpha
   \rho$ and $\alpha \rho'$ are the chordless cycles. By \cite[Corollary 3.7]{BR},
    $ \rho$ and $\rho'$ contain no new arrows, and hence $\rho$ and
  $\rho'$ are paths in $\QC$. Since the algebra $C$ is simply connected, it follows that
  $\rho-\rho'$ is a relation in $C$.
\qed
\end{proof}  

\begin{lemma}\label{lem 3} 
The number of old inner arrows in $B$ is equal to the number of
new commutativity relations in $B$.
\end{lemma}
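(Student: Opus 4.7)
The plan is to construct an explicit bijection between the old inner arrows of $\tilde Q$ and the new commutativity relations of $B$, mediated by the partial derivatives of the defining potential $W=\sum_{\rho\in R}\alpha_\rho \rho$. Recall from \cite{Ke} that $\tilde I$ is generated by $\{\partial_\gamma W:\gamma\in\tilde Q_1\}$; since $\partial_{\alpha_\rho}W=\rho$ for each new arrow $\alpha_\rho$, the ``new'' part of $\tilde I$ is cut out by the $\partial_\beta W$ as $\beta$ ranges over the old arrows.

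To each old inner arrow $\beta$ I assign $r_\beta:=\partial_\beta W\in\tilde I$. Because $\beta$ lies in exactly two chordless cycles $C_1,C_2$, each containing (by \cite[Corollary 3.7]{BR}) a unique new arrow $\alpha_i$, I write $C_i=\alpha_i\pi_i$ with $\pi_i=p_i\beta q_i$ for paths $p_i,q_i$ of old arrows; the cyclic derivative then yields
\[r_\beta=q_1\alpha_1 p_1\,\pm\,q_2\alpha_2 p_2,\]
a commutativity relation between two paths antiparallel to $\beta$, each monomial containing a new arrow. Note that for an old outer arrow $\beta$, $\partial_\beta W$ is a single monomial (a new zero relation), and $\partial_\beta W=0$ if $\beta$ belongs to no chordless cycle.

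Injectivity of $\beta\mapsto r_\beta$ is immediate: distinct old arrows have distinct source or target (representation-finite cluster-tilted algebras of Dynkin type have no multiple arrows), so $r_\beta$ and $r_{\beta'}$ lie in different summands $e_y \tilde I e_x$. For surjectivity, let $r$ be a new minimal commutativity relation in $B$. By Lemma \ref{relations} every monomial of $r$ contains a new arrow, by Lemma \ref{13} exactly one, and the simple connectedness of the Dynkin tilted algebra $C$ forces $r$ to have exactly two monomials. Using the generating set $\{\partial_\gamma W\}$ for $\tilde I$ and the fact that each $\partial_{\alpha_\rho}W=\rho$ merely reproduces an old relation, I would then argue that $r$ is proportional to $\partial_\beta W$ for some old arrow $\beta$; the presence of exactly two monomials forces $\beta$ to lie in two chordless cycles, i.e., to be an old inner arrow.

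The main obstacle is this surjectivity step: specifically, ruling out that a new minimal commutativity relation could arise only as a non-trivial $k$-linear combination of several $\partial_\beta W$ rather than from a single old inner arrow. The key ingredient is that any such hypothetical combination would, via the canonical projection $B\twoheadrightarrow C$, produce a commutativity relation in $C$ outside the minimal system $R$, contradicting the rigidity of $R$ guaranteed by the simple connectedness of the Dynkin tilted algebra $C$ (see \cite{L}).
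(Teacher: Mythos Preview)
Your bijection $\beta\mapsto r_\beta=\partial_\beta W$ is exactly the one the paper uses, only the paper phrases it through the explicit presentation of $B$ from \cite{CCS2,BMR2}: to each arrow $\alpha$ lying in a chordless cycle one attaches $\rho_\alpha=\rho$ if there is a unique path $\rho$ with $\rho\alpha$ chordless, and $\rho_\alpha=\rho-\rho'$ if there are two such paths. These $\rho_\alpha$ form a minimal system of relations for $\tilde I$, and $\rho_\alpha$ agrees with $\partial_\alpha W$ up to sign. The bijection ``inner arrow $\leftrightarrow$ commutativity relation'' is then immediate from the definition, and one checks (using \cite[Corollary 3.7]{BR}: each chordless cycle contains exactly one new arrow) that $\alpha$ is old if and only if $\rho_\alpha$ is new.

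The gap in your argument is the surjectivity step, and it is self-inflicted. The quantity ``number of new commutativity relations in $B$'' is a count within a fixed minimal system of relations for $\tilde I$. Once you recognise (or simply cite from \cite{CCS2,BMR2}) that $\{\partial_\gamma W:\gamma\in\tilde Q_1\}$ \emph{is} such a system, surjectivity is tautological: every new commutativity relation in the system is by construction some $\partial_\beta W$ with $\beta$ old and inner. What you attempt instead is the stronger and unnecessary claim that an arbitrary minimal two-term element of $\tilde I$ must be proportional to a single $\partial_\beta W$; the simple-connectedness sketch you offer does not establish this (it is unclear, for instance, why a hypothetical combination of several $\partial_\beta W$ would project to a nontrivial relation in $C$ outside $R$). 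Drop that paragraph, cite the presentation from \cite{CCS2,BMR2}, and the proof reduces to two lines, as in the paper.
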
  

\begin{proof} 
We recall from \cite{CCS2, BMR2} the description of $B$ as a bound quiver algebra: For any arrow $\alpha$ in $\QB$ let
  $\mathcal{S}_\alpha$ be the set of paths $\rho$ in $\QB$ such that $\rho\alpha$
  is a chordless cycle and define 
\[\rho_\alpha=\left\{
\begin{array}{ll} \rho &\textup{if $\mathcal{S}_\alpha=\{\rho\}$}\\ 
 \rho-\rho' &\textup{if $\mathcal{S}_\alpha=\{\rho,\rho'\}.$}
\end{array}\right.\]
Let $I$ be the ideal in $k\QB$ generated by
the relations $ \cup_{\alpha\in(\QB)_1} \{\rho_\alpha\}$. Then
\[B=k\QB/I.\]

Because of the previous
remarks, commutativity relations are in bijection with inner arrows. If the
 relation is new, then the arrow is old and if the arrow is new { then}
 the relation is old.
\qed
\end{proof}  

\bigskip

We are now able to prove the main theorem of this section.

\begin{theorem}\label{thm 1} Let $B$ be a representation-finite cluster-tilted algebra and $\QB$ the quiver of $B$. Then
$
 n_B$ equals the  number of chordless cycles in $\QB$ minus the number
  of inner arrows in  $\QB.$
\end{theorem}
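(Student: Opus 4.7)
The plan is to combine the four results already established earlier in the section: the formula from \cite[Corollary 3.3]{AR} expressing $n_B$ in terms of relations in $C$, together with Lemmas \ref{lem 1}, \ref{lem 2}, and \ref{lem 3}. All four statements give bijections or counts between certain sets of arrows, cycles, and relations, so the theorem should follow by simple arithmetic once these are assembled.

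First I would record the starting point: by \cite[Corollary 3.3]{AR} we have
\[
n_B \;=\; (\text{number of relations in } C) \;-\; (\text{number of new commutativity relations in } B).
\]
Every relation in a tilted algebra of Dynkin type is either a zero relation or a commutativity relation, so the number of relations in $C$ splits as (zero relations in $C$) $+$ (commutativity relations in $C$).

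Next I would compute (chordless cycles) $-$ (inner arrows). By Lemma \ref{lem 1}, the number of chordless cycles in $\QB$ equals (zero relations in $C$) $+$ $2\cdot$(commutativity relations in $C$). The inner arrows split into new and old ones: by Lemma \ref{lem 2} the new inner arrows are in bijection with commutativity relations in $C$, and by Lemma \ref{lem 3} the old inner arrows are in bijection with new commutativity relations in $B$. Subtracting, two of the three terms involving ``commutativity relations in $C$'' cancel, leaving
\[
(\text{zero rel.\ in }C) + (\text{comm.\ rel.\ in }C) - (\text{new comm.\ rel.\ in }B),
\]
which by the observation above is exactly the expression for $n_B$.

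There is no real obstacle in this step — it is a bookkeeping argument — provided the four preceding lemmas are in hand. The only point that deserves a brief remark is that the splitting of relations in $C$ into zero and commutativity relations relies on the fact that representation-finite (hence Dynkin) tilted algebras are simply connected, which was already used in the proof of Lemma \ref{lem 1}; this ensures that all minimal relations are either monomial or binomial commutativity relations, so no other type of relation contributes to the count.
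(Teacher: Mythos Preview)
Your proposal is correct and follows essentially the same route as the paper: start from the formula $n_B=(\text{relations in }C)-(\text{new commutativity relations in }B)$ from \cite[Corollary 3.3]{AR}, then combine Lemmas \ref{lem 1}, \ref{lem 2}, \ref{lem 3} to rewrite both terms in terms of chordless cycles and inner arrows. Your version is in fact slightly more explicit than the paper's, since you spell out the splitting of relations in $C$ into zero and commutativity relations (and justify it via simple connectedness), whereas the paper uses this implicitly when passing from Lemmas \ref{lem 1} and \ref{lem 2} to the count of relations in $C$.
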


\begin{proof}
By definition, $n_B$ is the number of relations in $C$ minus the
number of new commutativity relations in $B$. By Lemmata \ref{lem 1} and \ref{lem 2}, the number of relations
in $C$ is equal to the number of chordless cycles minus the number of
new inner arrows in $\QB$. On the other hand, the number of new commutativity relations in $B$ is
equal to the number of { old} inner arrows in $\QB$, because of Lemma \ref{lem
  3}.
Therefore 

\[ n_B= \textup{ number of chordless cycles in }\QB - \textup{ number
  of inner arrows in } \QB.\]
\qed
\end{proof}

\begin{corollary}\label{cor 1} If $\QB$ is connected then
\[\begin{array}{rcl}
n_B&=&1+ \textup{ number of outer arrows in } \QB -n.\\
\end{array}  \]
\end{corollary}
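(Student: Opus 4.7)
The plan is to deduce Corollary \ref{cor 1} directly from Theorem \ref{thm 1} by applying Euler's formula to the underlying planar graph of $\tilde Q$.

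First I would recall from Theorem \ref{thm 1} that
\[n_B = c - i,\]
where $c$ denotes the number of chordless cycles in $\tilde Q$ and $i$ the number of inner arrows. Writing $e$ for the total number of arrows in $\tilde Q$ and $o$ for the number of outer arrows, I have $e = i + o$. Since the quiver $\tilde Q$ of a representation-finite cluster-tilted algebra is planar (by \cite[Theorem A1]{CCS2}) and connected by hypothesis, I can apply Euler's formula to its underlying undirected graph:
\[n - e + f = 2,\]
where $f$ is the number of faces of a planar embedding.

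Next I would identify the faces in the canonical planar embedding. In the planar realization coming from the surface model of the cluster category, the bounded faces of $\tilde Q$ are precisely the chordless cycles: each chordless cycle bounds a region (and no chord subdivides it, by definition), and conversely every bounded face is itself a chordless cycle, for otherwise a chord inside it would witness a further decomposition. Therefore $f = c + 1$, and Euler's formula gives
\[c = e - n + 1 = i + o - n + 1.\]
Substituting this into the expression from Theorem \ref{thm 1} yields
\[n_B = c - i = (i + o - n + 1) - i = 1 + o - n,\]
which is the desired equality.

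The main obstacle is the identification of bounded faces of the planar embedding with chordless cycles; once that is in hand, the rest is a two-line computation. This identification is the geometric content behind the planarity statement cited from \cite{CCS2}, and, combined with the fact that $\tilde Q$ has no multiple arrows between the same ordered pair of vertices in the representation-finite case, it makes Euler's formula directly applicable.
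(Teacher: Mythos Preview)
Your proof is correct and follows essentially the same approach as the paper: both invoke planarity of $\tilde Q$ from \cite{CCS2}, identify the bounded faces of a planar embedding with the chordless cycles so that Euler's formula yields $c = 1 + |(\tilde Q)_1| - |(\tilde Q)_0|$, and then substitute into Theorem~\ref{thm 1}. The paper phrases the face identification via a simplicial complex on the sphere rather than via the surface model, but the argument is the same.
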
  

\begin{proof}
Because of
\cite[Theorem A1]{CCS2} the quiver $\widetilde{Q}$ is planar. In particular,
every arrow lies in at most two chordless cycles. Hence one can associate a simplicial complex on the
2-dimensional sphere to the quiver $\QB$, in such a way that $Q_0$ is the
set of points, $Q_1$ the set of edges and the set of chordless cycles
is the set of faces of the simplicial complex except the face coming
from the ``outside'' of the quiver (the unbounded component of the
complement when embedded in the plane). Using Euler's formula, we
see that the number of chordless cycles in $\QB$ is equal to
$1+|(\QB)_1|-|(\QB)_0|$, and then Theorem \ref{thm 1} yields
\[n_B= 1+ (|(\QB)_1|- \textup{ number
  of inner arrows in } \QB)-|(\QB)_0|, \] 
and the statement follows.
\qed
\end{proof}

\begin{obser}\label{rem 1} If $\QB$ is not connected then
\[ 
\begin{array}{rcl}
  n_B&=& \textup{number of connected components of } \QB\\ && + \textup{
  number of outer arrows in } \QB -n.
\end{array}  \]
\end{obser}

As an application, we show the following corollary on deleting points.
Let $x\in (\QB)_0$, and $e_x\in B$ the associated idempotent. Then
$B/Be_xB$ is cluster-tilted and the quiver of $B/Be_xB$ is obtained
from $\QB$ by deleting the point $x$ and all arrows adjacent to $x$, see \cite[Section 2]{BMR3}.
Define the \emph{Hochschild degree} of $x$ to be the integer
\[\textup{deg}_{{\rm HH}}(x)= n_B-n_{B/Be_xB}\]

\begin{corollary}\label{cor 2}
\[\begin{array}{rcl}
\textup{deg}_{\rm{HH}}(x)&=& 
\textup{ number of chordless cycles going through } x\\
&&- \textup{ number of inner arrows on the chordless cycles going through } x
\end{array}  \]
\end{corollary}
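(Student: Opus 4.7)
The plan is to reduce Corollary \ref{cor 2} to Theorem \ref{thm 1} applied twice, once to $B$ and once to $B' = B/Be_xB$. From the definition $\textup{deg}_{\rm HH}(x) = n_B - n_{B'}$ together with Theorem \ref{thm 1}, one obtains
$$\textup{deg}_{\rm HH}(x) = (c_B - c_{B'}) - (i_B - i_{B'}),$$
where $c_?$ and $i_?$ denote the number of chordless cycles and of inner arrows in the relevant quiver. Since the quiver $\tilde Q'$ of $B'$ is obtained from $\tilde Q$ by deleting $x$ along with its incident arrows, everything reduces to computing these two differences combinatorially.

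First I would handle the chordless cycles: any chordless cycle of $\tilde Q$ avoiding $x$ remains a full subquiver of $\tilde Q'$ and hence is still chordless, while conversely any chordless cycle of $\tilde Q'$ is automatically chordless in $\tilde Q$, since no arrows are added in passing from $\tilde Q$ to $\tilde Q'$. Therefore $c_B - c_{B'}$ equals the number of chordless cycles of $\tilde Q$ passing through $x$.

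For the inner arrows I would perform a case analysis on an inner arrow $\alpha$ of $\tilde Q$. If $\alpha$ is incident to $x$ then $\alpha$ is deleted in $\tilde Q'$, and both of its chordless cycles in $\tilde Q$ necessarily pass through $x$; if $\alpha$ is not incident to $x$ then $\alpha$ survives in $\tilde Q'$, and by the previous step its chordless cycles there are precisely those chordless cycles of $\tilde Q$ through $\alpha$ that avoid $x$. Hence $\alpha$ remains inner in $\tilde Q'$ if and only if neither of its two chordless cycles in $\tilde Q$ passes through $x$, and so $i_B - i_{B'}$ equals the number of inner arrows of $\tilde Q$ lying on at least one chordless cycle through $x$. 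Substituting the two counts into the displayed equation above yields the stated formula.

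The main point I expect to require care is the bookkeeping for the inner arrows --- in particular, confirming that no new inner arrow can appear in $\tilde Q'$, and that an inner arrow of $\tilde Q$ incident to $x$ really has both of its chordless cycles passing through $x$. Both of these points follow from the fact that $\tilde Q'$ is a full subquiver of $\tilde Q$, so the remainder of the argument should be routine. \qed
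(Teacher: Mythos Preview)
Your proposal is correct and follows essentially the same approach as the paper: both apply Theorem \ref{thm 1} to $B$ and to $B/Be_xB$, then identify $c_B-c_{B'}$ with the chordless cycles through $x$ and $i_B-i_{B'}$ with the inner arrows lying on at least one such cycle. You are slightly more explicit than the paper in checking that no new inner arrows can arise in $\tilde Q'$ and in handling the case of inner arrows incident to $x$, but the underlying argument is identical.
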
  

\begin{proof} Using Theorem \ref{thm 1}, we get 
that $\textup{deg}_{\rm{HH}}(x)$ is equal to 
the number of chordless cycles that are adjacent to $x$ minus the number of
inner arrows in $\QB$ plus the number of inner arrows in
$Q_{B/Be_xB}$. Now $\alpha$ is an inner arrow in $\QB$ which is not an inner
arrow in $Q_{B/Be_xB}$, precisely if $\alpha$ lies on
two chordless cycles in $\QB$ at least one of which goes through $x$.
\qed
\end{proof}  

\begin{example} 
The following quiver is the quiver of a cluster-tilted algebra  of type $\mathbb{E}_8$.
\begin{center}
$$\xymatrix@R=12pt@C=12pt{
&1\ar[rr]\ar[ldd]&&2\ar[rr]\ar[ldd]&&3\ar[ldd]\\ \\
4\ar[rr]&&5\ar[luu]\ar[dd]&&6\ar[luu]\\
\\
7\ar[uu]&&8\ar[ll]
}$$
\end{center}
The quiver has $4$ chordless cycles and $2$ inner arrows, so Theorem \ref{thm 1} yields
$$\textup{HH}^1(B)=k^{4-2}=k^2.$$
On the other hand, the quiver has $9$ outer arrows, so, using Corollary \ref{cor 1}, we also get
$$\textup{HH}^1(B)=k^{1+9-8}=k^2.$$ 
The point $2$ has Hochschild degree $2-1=1$, by Corollary \ref{cor 2}. So
$\textup{HH}^1(B/Be_2B)=k$.
The quiver of $B/Be_2B$ is the following.

\begin{center}
$$\xymatrix@R=12pt@C=12pt{
&1\ar[ldd]&&&&3\ar[ldd]\\ \\
4\ar[rr]&&5\ar[luu]\ar[dd]&&6\\
\\
7\ar[uu]&&8\ar[ll]
}$$
\end{center}
{ Observe that Remark \ref{rem 1} applies here: the number of connected components is $2$, the number of outer arrows is $6$, and $n=7$.  Thus we get $n_B= 2+6-7=1$.}
\end{example}

\footnotesize{
\noindent Ibrahim Assem: \\
D\'epartement de Math\'ematiques, Universit\'e de Sherbrooke, \\ Sherbrooke, Qu\'ebec, Canada, J1K 2R1. \\
{\it ibrahim.assem@usherbrooke.ca} \\

\noindent Mar\'\i a Julia Redondo:
\\Departamento de Matem\'atica,
Universidad Nacional del Sur,\\Av. Alem 1253\\8000 Bah\'\i a Blanca,
Argentina.\\ {\it mredondo@criba.edu.ar} \\

\noindent Ralf Schiffler:
\\ Department of Mathematics, University of Connecticut,\\ Storrs, CT, USA 06268 \\{\it schiffler@math.uconn.edu}}

\end{document}